\newtheorem{theorem}{Theorem}[section]
\newtheorem{definition}[theorem]{Definition}
\newtheorem{prop}[theorem]{Proposition}
\newtheorem{lem}[theorem]{Lemma}
\newtheorem{cor}[theorem]{Corollary}
\theoremstyle{remark}
\newtheorem{remark}[theorem]{Remark}
\newenvironment{proofof}[1]{\begin{trivlist}
      \item[]\hspace{0cm}{\it Proof of #1.}
      \hspace{0cm}}{\hfill $\square$
      \end{trivlist}}
\newenvironment{rem}{\begin{remark}\rm}{\end{remark}}
\definecolor{gr}{rgb}   {0.,   0.75,   0.23 }
\definecolor{bl}{rgb}   {0.,   0.5,   1. }
\definecolor{cy}{rgb}   {0.,   0.57,   0.67 }
\definecolor{mg}{rgb}   {0.85,  0.,    0.85}
\definecolor{marron}{rgb}  {0.6, 0.40, 0.1} 
\definecolor{or}{rgb}   {0.8,  0.4,   0.}
\definecolor{webred}{rgb}{0.75,0,0}
\definecolor{webgreen}{rgb}{0,0.75,0}
\newcommand{\N}{\mathbb{N}}
\newcommand{\R}{\mathbb{R}}
\newcommand{\cA}{\mathcal{A}}
\newcommand{\cC}{\mathcal{C}}
\newcommand{\cL}{\mathcal{L}}
\newcommand{\cO}{\mathcal{O}}
\newcommand{\cS}{\mathcal{S}}
\newcommand{\cU}{\mathcal{U}}
\newcommand{\cV}{\mathcal{V}}
\newcommand{\cW}{\mathcal{W}}
\newcommand{\gD}{\mathfrak{D}}
\newcommand{\gP}{\mathfrak{P}}
\newcommand{\gV}{\mathfrak{V}}
\newcommand{\dS}{\mathbb{S}}
\newcommand{\Rob}{\sf Rob}
\newcommand{\bA}{{\boldsymbol{\mathsf{A}}}}
\newcommand{\bB}{{\boldsymbol{\mathsf{B}}}}
\newcommand{\sfA}{{\mathsf{A}}}
\newcommand{\sfB}{{\mathsf{B}}}
\newcommand{\be}{{\boldsymbol{\mathsf{e}}}}
\newcommand{\bff}{{\boldsymbol{\mathsf{f}}}}
\newcommand{\bp}{{\boldsymbol{\mathsf{p}}}}
\newcommand{\bv}{{\boldsymbol{\mathsf{v}}}}
\newcommand{\bx}{{\boldsymbol{\mathsf{x}}}}
\newcommand{\bX}{{\boldsymbol{\mathsf{X}}}}
\newcommand{\bfz}{{\boldsymbol{0}}}
\newcommand{\sC}{\mathscr{C}}
\newcommand{\sE}{\mathscr{E}}
\newcommand{\sL}{\mathrm{L}}
\newcommand{\sH}{\mathrm{H}}
\newcommand{\sB}{\mathrm{B}}
\newcommand{\rd}{\,\mathrm{d}}
\newcommand{\rx}{\mathsf{x}}
\newcommand{\rD}{\mathrm{D}}
\newcommand{\rJ}{\mathrm{J}}
\newcommand{\rL}{\mathrm{L}}
\newcommand{\rX}{\mathsf{X}}
\newcommand{\rN}{\mathrm{N}}
\newcommand{\rP}{\mathrm{P}}
\newcommand{\rR}{\mathrm{R}}
\newcommand{\ess}{\mathrm{ess}}
\newcommand{\ee}{\hskip 0.15ex}
\newcommand\dom{\operatorname{Dom}}
\newcommand\curl{\operatorname{curl}}
\newcommand\Id{\operatorname{\mathbb{I}}}
\newcommand{\OP}{H} % operateur
\newcommand{\En}{E} % ground energy
\newcommand{\seE}{\mathscr{E}^*}
\newcommand{\diffeo}{\mathrm U}
\def\a{\varepsilon}
\def\cone{\cC_{\omega}}
\def\conea{\cC_{\omega_{\a}}}
\def\w{\mathrm{w}}
\def\harm{\mathfrak p}
\title{Magnetic Laplacian in sharp three-dimensional cones}
\author[V. Bonnaillie-No\"el]{Virginie Bonnaillie-No\"el}
\address{Virginie Bonnaillie-No\"el, D\'epartement de Math\'ematiques et Applications (DMA UMR 8553), PSL Research University, CNRS, ENS Paris, 45 rue d'Ulm, F-75230 Paris Cedex 05, France} 
\email{bonnaillie@math.cnrs.fr}
\author[M. Dauge]{Monique Dauge}
\address{Monique Dauge, IRMAR UMR 6625 - CNRS, Universit\'e de Rennes 1, Campus de Beaulieu, 35042 Rennes Cedex, France} 
\email{monique.dauge@univ-rennes1.fr}
\author[N. Popoff]{Nicolas Popoff}
\address{Nicolas Popoff, IMB UMR 5251 - CNRS, Universit\'e de Bordeaux, 351 cours de la lib\'eration, 33405 Talence Cedex, France}
\email{nicolas.popoff@u-bordeaux.fr}
\author[N. Raymond]{Nicolas Raymond}
\address{Nicolas Raymond, IRMAR UMR 6625 - CNRS, Universit\'e de Rennes 1, Campus de Beaulieu, 35042 Rennes Cedex, France} 
\email{nicolas.raymond@univ-rennes1.fr}
\begin{document}

\begin{abstract}
The core result of this paper is an upper bound for the ground state energy of the magnetic Laplacian with constant magnetic field on cones that are contained in a half-space. This bound involves a weighted norm of the magnetic field related to moments on a plane section of the cone. When the cone is sharp, i.e. when its section is small, this upper bound tends to $0$. A lower bound on the essential spectrum is proved for families of sharp cones, implying that if the section is small enough the ground state energy is an eigenvalue. This circumstance produces corner concentration in the semi-classical limit for the magnetic Schr\"odinger operator when such sharp cones are involved. 
\end{abstract}

\today
\maketitle

\section{Introduction}
\subsection{Motivation}
The onset of supraconductivity in presence of an intense magnetic field in a body occupying a domain $\Omega$ is related to the lowest eigenvalues of \enquote{semiclassical} magnetic Laplacians in $\Omega$ with natural boundary condition (see for instance \cite{LuPan00, FouHe06-2, FouHe10}), and its localization is connected with the localization of the corresponding eigenfunctions.

The semiclassical expansion of the first eigenvalues of Neumann magnetic Laplacians has been addressed in numerous papers, considering constant or variable magnetic field. In order to introduce our present study, it is sufficient to discuss the case of a {\em constant magnetic field} $\bB$ and of a simply connected domain $\Omega$. 

For any chosen $h>0$, let us denote by $\lambda_{h}(\bB,\Omega)$ the first eigenvalue of the magnetic Laplacian $(-ih\nabla+\bA)^2$ with Neumann boundary conditions. Here $\bA$ is any associated potential (i.e., such that $\curl\bA=\bB$). The following facts are proved in dimension $2$.
\begin{enumerate}[\em i)]
\item The eigenmodes associated with $\lambda_{h}(\bB,\Omega)$ localize near the boundary as $h\to0$, see \cite{HeMo01}.
\item For a smooth boundary, these eigenmodes concentrate near the points of maximal curvature, see \cite{FouHe06}.
\item In presence of corners for a polygonal domain, these eigenmodes localize near acute corners (i.e. of opening $\le\frac\pi2$), see \cite{Bo05, BoDa06}.
\end{enumerate}
Results {\em i)} and {\em iii)} rely on the investigation of the collection of the ground state energies $E(\bB,\Pi_{\bx})$ of the associated {\em tangent problems}, i.e., the magnetic Laplacians for $h=1$ with the same magnetic field $\bB$, posed on the (dilation invariant) tangent domains $\Pi_{\bx}$ at each point $\bx$ of the closure of $\Omega$. The tangent domain $\Pi_{\bx}$ is the full space $\R^2$ if $\bx$ is an interior point, the half-space $\R^2_+$ if $\bx$ belongs to a smooth part of the boundary $\partial\Omega$, and a sector $\cS$ if $\bx$ is a corner of a polygonal domain. The reason for {\em i)} is the inequality $E(\bB,\R^2_+)<E(\bB,\R^2)$ and the reason for {\em iii)} is that the ground state energy associated with an acute sector $\cS$ is less than that of the half-plane $\R^2_+$. Beyond this result, there also holds the small angle asymptotics (see \cite[Theorem 1.1]{Bo05}), with $\cS_{\alpha}$ the sector of opening angle $\alpha$, 
\begin{equation}
\label{eq:sector}
   E(\bB,\cS_{\alpha}) = \|\bB\| \frac{\alpha}{\sqrt{3}} + \cO(\alpha^3).
\end{equation}
Asymptotic formulas for the first eigenvalue $\lambda_{h}(\bB,\Omega)$ are established in various configurations (mainly in situations {\em ii)} and {\em iii)}) and the first term is always given by
\begin{equation}
\label{eq:asy2}
   \lim_{h\to0} \frac{\lambda_{h}(\bB,\Omega)}{h} = \inf_{\bx\in\overline\Omega} E(\bB,\Pi_{\bx})\,.
\end{equation}

As far as three-dimensional domains are concerned, in the recent contribution \cite{BoDaPo14} formula \eqref{eq:asy2} is proved to be still valid in a general class of corner domains for which tangent domains at the boundary are either half-planes, infinite wedges or genuine infinite 3D cones with polygonal sections. Various convergence rates are proved. Thus the analysis of the Schr\"odinger operator with constant magnetic field on general cones is crucial to exhibit the main term of the expansion of the ground energy of the magnetic Laplacian in any corner domain. 
As in 2D, the interior case $\Pi_{\bx}=\R^3$ ($\bx\in\Omega$) is explicit, and the half-space is rather well known (see \cite{Pan02,HeMo02}). The case of wedges has been more recently addressed in \cite{PoTh,Pof13T,Pop13}.

When the infimum is reached at a corner, a better upper bound of $\lambda_{h}(\bB,\Omega)$ can be proved as soon as the bottom of the spectrum of the corresponding tangent operator is discrete \cite[Theorem 9.1]{BoDaPo14}. If, moreover, this infimum is attained {\em at corners only}, the corner concentration holds for associated eigenvectors \cite[Section 12.1]{BoDaPo14}. 
So the main motivation of the present paper is to investigate 3D cones in order to find sufficient conditions ensuring positive answers to the following questions:
\begin{enumerate}[(Q1)]
\item A 3D cone $\Pi$ being given, does the energy $\En(\bB,\Pi)$ correspond to a discrete eigenvalue for the associated magnetic Laplacian?
\item A corner domain $\Omega\subset \R^{3}$ being given, is the infimum in \eqref{eq:asy2} reached at a corner, or at corners only?
\end{enumerate}

In \cite{Pan02}, positive answers are given to these questions when $\Omega$ is a cuboid (so that the 3D tangent cones are octants), under some geometrical hypotheses on the orientation of the magnetic field. In \cite{BoRay13,BoRay14}, the case of {\em right circular cones} (that we denote here by $\cC^\circ_\alpha$ with $\alpha$ its opening) is investigated: a full asymptotics is proved, starting as
\begin{equation}
\label{eq:conecirc}
   E(\bB,\cC^\circ_\alpha) = \|\bB\| {\sqrt{ 1+\sin^2\beta}} \frac{3\alpha}{4\sqrt{2}} + \cO(\alpha^3),
\end{equation} 
where $\beta$ is the angle between the magnetic field $\bB$ and the axis of the cone. When combined with a positive $\alpha$-independent lower bound of the essential spectrum, such an asymptotics  guarantees that for $\alpha$ small enough, $E(\bB,\cC^\circ_\alpha)$ is an eigenvalue, providing positive answer to Question (Q1).

The aim of this paper is to deal with more general cones, especially with {\em polygonal section}. We are going to prove an upper bound that has similar characteristics as the asymptotical term in \eqref{eq:conecirc}. We will also prove that there exist eigenvalues below the essential spectrum as soon as the cone is {\it sharp} enough, and therefore provide sufficient conditions for a positive answer to Question (Q1).

One of the main new difficulties is that the essential spectrum strongly depends on the dihedral angles of the cones, and that, if these angles get small, the essential spectrum may go to $0$ by virtue of the upper bound
\begin{equation}
\label{eq:wedge}
   E(\bB,\cW_\alpha) \le \|\bB\| \frac{\alpha}{\sqrt{3}} + \cO(\alpha^3),
\end{equation}
where $\alpha$ is the opening of the wedge $\cW_\alpha$. Here the magnetic field $\bB$ is assumed either  to be contained in the bisector plane of the wedge (see \cite[Proposition 7.6]{PoTh}), or to be tangent to a face of the wedge (see \cite[Section 5]{Pof13T}). The outcome of the present study is that eigenvalues will appear under the essential spectrum for sharp cones that do not have sharp edges. 

Obviously, \eqref{eq:wedge} may also be an obstruction to a positive answer to Question (Q2). Combining  our upper bound for sharp cones with the positivity and the continuity of the ground energy on wedges, we will deduce that a domain that has a sharp corner gives a positive answer to (Q2), provided the opening of its edges remained bounded from below. We will also exhibit such a domain by an explicit construction. 

Finally, we can mention that that there exist in the literature various works dealing with spectral problems involving conical domains: Let us quote among others the ``$\delta$-interaction'' Schr\"odinger operator, see \cite{BeExLo14}, and the Robin Laplacian, see \cite{LevPar08}. We find out that the latter problem shares many common features with the magnetic Laplacian, and will describe some of these analogies in the last section of our paper.

\subsection{Main results}
Let us provide now the framework and the main results of our paper.
We will consider cones defined through a plane section.
\begin{definition}
Let  $\omega$ be a bounded and connected open subset of $\R^2$. We define the cone $\cone$ by
\begin{equation}\label{def.cone}
\cone  = \left\{ \bx=(\rx_{1},\rx_{2},\rx_{3})\in \R^3 : \quad\rx_{3}>0\quad\mbox{and}\quad
\left(\frac{\rx_{1}}{\rx_{3}},\frac{\rx_{2}}{\rx_{3}}\right)\in \omega\right\}.
\end{equation}
\end{definition}
 
Let $\bB=(\sfB_{1},\sfB_{2},\sfB_{3})^{\sf T}$ be a constant magnetic field and $\bA$ be an associated linear magnetic potential, i.e., such that $\curl \bA=\bB$.
We consider the quadratic form
\begin{align*}
q[\bA,\cone ](u) &= \int_{\cone}|(-i\nabla+\bA)u|^2 \rd\bx,
\end{align*}
defined on the form domain 
$\dom(q[\bA,\cone ])=\left\{u\in\sL^2(\cone):\ (-i\nabla+\bA)u\in\sL^2(\cone)\right\}$.
We denote by $\OP(\bA,\cone)$ the Friedrichs extension of this quadratic form. If the domain $\omega$ is regular enough (for example if $\omega$ is a bounded polygonal domain), $\OP(\bA,\cone)$ coincides with the Neumann realization of the magnetic Laplacian on $\cone$ with the magnetic field $\bB$. 
By gauge invariance 
the spectrum of $\OP(\bA,\cone)$ depends only on the magnetic field $\bB$ and not on the magnetic potential $\bA$ that is {\it a priori} assumed to be linear.
For $n\in \N$, we define $\En_{n}(\bB,\cone)$ as the $n$-th Rayleigh quotient of $\OP(\bA,\cone)$:
\begin{align}
\label{Rayleigh}
   \En_{n}(\bB,\cone) 
   &=\sup_{u_{1},\ldots,u_{n-1}\in \dom(q[\bA,\cone])} \ \inf_{\substack{u\in[u_{1},\ldots,u_{n-1}]^{\bot}\\ u\in \dom(q[\bA,\cone]) }} 
   \frac{q[\bA,\cone ](u)}{\|u\|^2_{\sL^2(\cone)}}. %\\
\end{align}
For $n=1$, we shorten the notation by $\En(\bB,\cone)$ that is the ground state energy of the magnetic Laplacian $\OP(\bA,\cone)$. 

\subsubsection{Upper bound for the first Rayleigh quotients}
Our first result states an upper bound for $\En_{n}(\bB,\cone)$ valid for any section $\omega$.

\begin{theorem}\label{prop.majo1}
Let $\omega$ be an open bounded subset of $\R^2$ and $\bB$ be a constant magnetic field. We define, for $k=0,1,2$, the normalized moments (here $|\omega|$ denotes the measure of $\omega$)
$$m_{k}:=\frac{1}{|\omega|} \int_{\omega}\rx_{1}^{k}\rx_{2}^{2-k}\rd \rx_{1} \rd \rx_{2}.$$
The $n$-th Rayleigh quotient satisfies the upper bound
\begin{equation}\label{eq:Ee}
\En_{n}(\bB,\cone)\leq (4n-1)e(\bB,\omega),
\end{equation}
where $e(\bB,\omega)$ is the positive constant defined by
\begin{equation}\label{eq:mi}
e(\bB,\omega)
=\left(\sfB_{3}^2\frac{m_{0}m_{2}-m_{1}^2}{m_{0}+m_{2}}
   + \sfB_{2}^2m_{2} + \sfB_{1}^2m_{0} -2 \sfB_{1}\sfB_{2}m_{1} \right)^{1/2}.
\end{equation}
\end{theorem}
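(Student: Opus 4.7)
My plan is to apply the min-max principle with an $n$-dimensional subspace of test functions of axial form $u(\bx) = \psi(\rx_3)$. After an optimal choice of linear gauge, the Rayleigh quotient restricted to this subspace reduces, via a weighted integration by parts in the axial variable, to the Rayleigh quotient of the one-dimensional Dirichlet harmonic oscillator $-\partial_t^2 + e(\bB,\omega)^2 t^2$ on $\sL^2(0,\infty)$, whose $n$-th eigenvalue is precisely $(4n-1)\,e(\bB,\omega)$: its Dirichlet eigenfunctions on $(0,\infty)$ are the odd, rescaled Hermite functions, with associated eigenvalues $(4k+3)e(\bB,\omega)$ for $k\ge 0$.

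Exploiting gauge invariance, I would work with the linear potential
$$\bA(\bx) = (\alpha \rx_1 + \beta \rx_2,\ \gamma \rx_1 + \delta \rx_2,\ \sfB_1 \rx_2 - \sfB_2 \rx_1)^{\sf T}$$
subject to the single constraint $\gamma - \beta = \sfB_3$; the real parameters $(\alpha,\beta,\gamma,\delta)$ are kept free until the end of the computation. Since $A_3$ is independent of $\rx_3$ and $\psi$ is taken real, the cross term in $|(-i\partial_3 + A_3)u|^2$ vanishes pointwise, so $|(-i\nabla+\bA)u|^2 = (A_1^2 + A_2^2 + A_3^2)|\psi|^2 + |\psi'|^2$. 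Setting $\tilde\psi(\rx_3):=\rx_3\,\psi(\rx_3)$ and performing the fibered substitution $\rx_i = \rx_3\,\rY_i$ with $(\rY_1,\rY_2)\in\omega$, one finds $\|u\|^2_{\sL^2(\cone)} = |\omega| \int_0^\infty |\tilde\psi|^2 \rd \rx_3$, while the identity
$$\int_0^\infty \rx_3^2 |\psi'|^2 \rd \rx_3 = \int_0^\infty |\tilde\psi'|^2 \rd \rx_3$$
(integration by parts, valid as soon as $\tilde\psi(\rx_3)^2/\rx_3 \to 0$ at both endpoints) converts the quadratic form into
$$q[\bA,\cone](u) = |\omega|\left(F(\alpha,\beta,\gamma,\delta) \int_0^\infty \rx_3^2 |\tilde\psi|^2 \rd \rx_3 + \int_0^\infty |\tilde\psi'|^2 \rd \rx_3\right),$$
where $F$ is an explicit quadratic polynomial in the gauge parameters whose coefficients are built from $m_0, m_1, m_2$ and the components of $\bB$.

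The heart of the matter is the constrained minimization of $F$. Stationarity in the unconstrained variables $\alpha, \delta$ gives $\alpha m_2 + \beta m_1 = 0$ and $\gamma m_1 + \delta m_0 = 0$; this is precisely where the Cauchy--Schwarz-type factor $m_0 m_2 - m_1^2$ enters. Substituting back and then optimizing over $\beta$ under $\gamma = \beta + \sfB_3$ gives $\beta = -\sfB_3 m_2/(m_0+m_2)$, $\gamma = \sfB_3 m_0/(m_0+m_2)$, whence $\min F = e(\bB,\omega)^2$. At this optimum the restricted Rayleigh quotient is exactly that of the one-dimensional Dirichlet harmonic oscillator described at the outset.

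To conclude, take $\tilde\psi_1, \ldots, \tilde\psi_n$ to be the first $n$ Dirichlet eigenfunctions of $-\partial^2 + e(\bB,\omega)^2 \rx^2$ on $(0,\infty)$ and set $u_k(\bx) := \tilde\psi_k(\rx_3)/\rx_3$. Each $u_k$ lies in $\dom(q[\bA,\cone])$ (Gaussian decay at infinity and boundedness near the apex, since $\tilde\psi_k(\rx_3)\sim\tilde\psi_k'(0)\,\rx_3$ there), and the $\sL^2(\cone)$-orthogonality of the $u_k$ reduces, via the weight $|\omega|\,\rx_3^2 \rd \rx_3$ exhibited above, to the $\sL^2(0,\infty)$-orthogonality of the $\tilde\psi_k$. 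On their linear span the Rayleigh quotient is bounded by $(4n-1)\,e(\bB,\omega)$, and \eqref{Rayleigh} delivers the claim. The main obstacle I anticipate is the gauge optimization: extracting the exact combination $(m_0 m_2 - m_1^2)/(m_0+m_2)$ requires four real gauge parameters with a single linear constraint, and it is the convenient decoupling of the stationarity equations in $\alpha$ and $\delta$ that produces the \enquote{determinant-over-trace} structure visible in $e(\bB,\omega)^2$.
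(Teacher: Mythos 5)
Your proposal is correct and follows essentially the same route as the paper: test functions depending only on $\rx_3$, the substitution $\tilde\psi=\rx_3\psi$ reducing the quotient to the Dirichlet harmonic oscillator on $(0,\infty)$ with eigenvalues $(4n-1)\sqrt{\lambda}$, and minimization of $\|\bA\|^2_{\sL^2(\omega)}/|\omega|$ over linear, $\rx_3$-independent gauges, whose optimum is exactly $e(\bB,\omega)^2$. The only difference is organizational: the paper factors out $\sfB_3$ and solves a 2D projection problem onto the affine hyperplane of transverse potentials with unit curl, whereas you minimize directly in the four gauge parameters under the single constraint $\gamma-\beta=\sfB_3$; the computations and the resulting minimizer coincide.
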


\begin{lem}
\label{lem.homog}
There holds
\begin{enumerate}[i)]
\item The application $\bB\mapsto e(\bB,\omega)$ is an $\omega$-dependent norm on $\R^{3}$. 
\item The application $(\bB,\omega)\mapsto e(\bB,\omega)$ is homogeneous:
\begin{equation}
\label{eq:homo}
   e(\bB,\omega) = |\omega|^{1/2}\,\|\bB\|\ e({\bf b},{\varpi}),
   \qquad \mbox{ with}\qquad 
   \mathbf{b}=\frac{\bB}{\|\bB\|},\quad {\varpi}=\frac{\omega}{|\omega|}.
\end{equation}
\end{enumerate}
\end{lem}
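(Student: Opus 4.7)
The plan is to recognize $e(\bB,\omega)^{2}$ as a quadratic form in $\bB$, namely $\bB^{{\sf T}} Q(\omega)\bB$, where
\[
Q(\omega)=
\begin{pmatrix}
m_{0} & -m_{1} & 0 \\
-m_{1} & m_{2} & 0 \\
0 & 0 & \dfrac{m_{0}m_{2}-m_{1}^{2}}{m_{0}+m_{2}}
\end{pmatrix}.
\]
With this rewriting, claim (i) reduces to showing that the symmetric matrix $Q(\omega)$ is positive definite, while claim (ii) becomes a routine change of variables in the integrals defining the moments $m_{k}$.

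For (i), I would apply Sylvester's criterion to the three leading principal minors of $Q(\omega)$. Since $\omega$ is a bounded \emph{open} subset of $\R^{2}$, it contains a disk not carried by either coordinate axis, so $m_{0}>0$ and $m_{2}>0$. The $2\times 2$ minor $m_{0}m_{2}-m_{1}^{2}$ is nonnegative by Cauchy--Schwarz applied to the coordinate functions $\rx\mapsto\rx_{1}$ and $\rx\mapsto\rx_{2}$ in $\rL^{2}(\omega)$, and strictness is forced by the fact that a nonempty open subset of $\R^{2}$ cannot lie on a single line through the origin. The full $3\times 3$ determinant then equals $(m_{0}m_{2}-m_{1}^{2})^{2}/(m_{0}+m_{2})>0$. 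Consequently $Q(\omega)$ is positive definite, and $\bB\mapsto e(\bB,\omega)$ is the Hilbertian norm on $\R^{3}$ it induces.

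For (ii), the norm property established in (i) already supplies the homogeneity in $\bB$, namely $e(\bB,\omega)=\|\bB\|\,e(\mathbf b,\omega)$, so only the dependence on $\omega$ needs to be disentangled. Setting $\lambda=|\omega|^{-1/2}$ so that $\varpi=\lambda\omega$ has unit Lebesgue measure, the substitution $\rx=\lambda\by$ with Jacobian factor $\lambda^{2}$ in the integral defining $m_{k}(\varpi)$ yields the scaling $m_{k}(\varpi)=|\omega|^{-1}\,m_{k}(\omega)$ for every $k\in\{0,1,2\}$. Plugging these identities into \eqref{eq:mi} and factoring out $|\omega|^{-1}$ from each term produces $e(\bB,\varpi)^{2}=|\omega|^{-1}\,e(\bB,\omega)^{2}$, which, combined with the norm identity, is exactly \eqref{eq:homo}.

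No substantive obstacle is anticipated: the argument is a direct verification. The only delicate point is the strict Cauchy--Schwarz inequality $m_{0}m_{2}-m_{1}^{2}>0$, which is precisely where the genuinely two-dimensional character of $\omega$ enters; without openness one could only assert non-strict positivity.
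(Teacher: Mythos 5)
Your proof is correct and in substance coincides with the paper's own argument: both reduce part (i) to the positive definiteness of the quadratic form $\bB\mapsto e(\bB,\omega)^2$, with the decisive input being the strict Cauchy--Schwarz inequality $m_0m_2-m_1^2>0$ (the Gram determinant of the functions $\rx_1,\rx_2$, which are linearly independent on the open set $\omega$) together with $m_0,m_2>0$; your Sylvester computation is just a matrix repackaging of the paper's geometric reading of the three coefficients. For part (ii) you spell out the scaling $m_k(\lambda\omega)=\lambda^2 m_k(\omega)$ with $\lambda=|\omega|^{-1/2}$ — correctly interpreting $\varpi$ as the unit-measure rescaling of $\omega$, the only reading under which \eqref{eq:homo} holds — a computation the paper leaves implicit, its written proof instead stressing the (unstated in the lemma) independence of $e(\bB,\omega)$ from the choice of Cartesian coordinates in the plane.
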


\begin{remark}
\label{rem:invar}
a) Although the quantity $e(\bB,\omega)$ is independent of the choice of the Cartesian coordinates $(\rx_1,\rx_2)$ in the plane $\rx_3=0$, it strongly depends on the choice of the $\rx_3$ ``axis'' defining this plane. Indeed, if a cone $\cC$ contained in a half-space is given, there are many different choices possible for coordinates $(\rx_1,\rx_2,\rx_3)$ so that $\cC$ can be represented as \eqref{def.cone}. To each choice of the $\rx_3$ axis corresponds a distinct definition of $\omega$. For instance, let  $\cC$ be a circular cone. If the $\rx_3$ axis is chosen as the axis of the cone, then $\omega$ is a disc. Any different choice of the axis $\rx_3$ yields an ellipse for $\omega$ and the corresponding quantity $e(\bB,\omega)$ would be larger. 

b) When $\omega$ is the disc of center $(0,0)$ and radius $\tan\frac\alpha2$, the cone $\cone$ equals the circular cone $\cC^\circ_\alpha$ of opening $\alpha$ considered in \cite{BoRay13,BoRay14}. Then we find that $e(\bB,\omega)$ coincides with the first term of the asymptotics \eqref{eq:conecirc} modulo $\cO(\alpha^3)$, which proves that our upper bound is sharp in this case (see Section~\ref{subsec.circular} below).
\end{remark}

\subsubsection{Convergence of the bottom of essential spectrum}

By the min-max principle, the quantity $\En_{n}(\bB,\cone)$,  defined in \eqref{Rayleigh}, is either the $n$-th eigenvalue of $\OP(\bA,\cone)$, or the bottom of the essential spectrum denoted by $\En_{\ess}(\bB,\cone)$.

The second step of our investigation is then to determine the bottom of the essential spectrum. We assume that $\omega$ is a bounded polygonal domain in $\R^2$. This means that the boundary of $\omega$ is a finite union of smooth arcs (the sides) and that the tangents to two neighboring sides at their common end (a vertex) are not colinear. Then the set $\cone \cap \dS^{2}$ called the section of the cone $\cone$ is a polygonal domain of the sphere that has the same properties.
For any $\bp \in \overline{\cone} \cap \dS^{2}$, we denote by $\Pi_{\bp}\subset \R^{3}$ the tangent cone to $\cone $ at $\bp$. More details about the precise definition of a tangent cone can be found in Appendix~\ref{SS:tangent} or \cite[Section 3]{BoDaPo14}. Let us now describe the nature of $\Pi_{\bp}$ according to the location of $\bp$ in the section of $\overline{\cone}$: 
\begin{enumerate}[(a)]
\item If $\bp$ belongs to $\cone\cap \dS^{2}$, i.e. is an interior point, then $\Pi_{\bp}=\R^{3}$.  
\item If $\bp$ belongs to the regular part of the boundary of $\cone \cap \dS^{2}$ (that is if $\bp$ is in the interior of a side of $\cone \cap \dS^{2}$), then $\Pi_{\bp}$ is a half-space.
\item If $\bp$ is a vertex of $\cone \cap \dS^{2}$ of opening $\theta$, then $\Pi_{\bp}$ is a wedge of opening $\theta$.
\end{enumerate}
The cone $\Pi_{\bp}$ is called a tangent substructure of $\cone$. The ground state energy of the magnetic Laplacian on $\Pi_{\bp}$ with magnetic field $\bB$ is well defined and still denoted by $\En(\bB,\Pi_{\bp})$.
Let us introduce the infimum of the ground state energies on the tangent substructures of $\cone$:
\begin{equation}\label{eq.Eness}
   \seE(\bB,\cone):=\inf_{\bp\,\in\, \overline{\cone} \cap \dS^{2}}\En(\bB,\Pi_{\bp}).
\end{equation}
Then \cite[Theorem 6.6]{BoDaPo14} yields that the bottom of the essential spectrum $\En_{\ess}(\bB,\cone)$ of the operator $\OP(\bA,\cone)$ is given by  this quantity:
\begin{equation}
\label{eq.Eness2}
   \En_{\ess}(\bB,\cone)=\seE(\bB,\cone).
\end{equation}
Now we take the view point of small angle asymptotics, like in \eqref{eq:sector}, \eqref{eq:conecirc}, and \eqref{eq:wedge}. But for general 3D cones there is no obvious notion of small angle $\alpha$. That is why we introduce families of sharp cones for which the plane section $\omega$ is scaled by a small parameter $\a>0$. More precisely, $\omega\subset \R^{2}$ being given, we define the dilated domain
\begin{equation}\label{E:defweps}
\omega_{\a}:=\a \omega, \quad \a>0,
\end{equation}
and consider the family of cones $\conea$ parametrized by \eqref{E:defweps}, as $\a\to 0$. The homogeneity \eqref{eq:homo} of the bound $e(\bB,\omega)$ implies immediately
\begin{equation}
\label{eq:homoa}
   e(\bB,\omega_{\a}) = e(\bB,\omega)\,\a\,.
\end{equation}
Thus the bound \eqref{eq:Ee} implies that the Rayleigh quotients $\En_{n}(\bB,\conea)$ tend to $0$ as $\a\to0$.

To determine the asymptotic behavior of $\En_{\ess}(\bB,\conea)$ as $\a\to0$, we introduce $\widehat{\omega}$ as the cylinder $\omega\times \R$ and define the infimum of ground energies
$$
   \sE(\bB,\widehat{\omega})=\inf_{\bx'\in \overline{\omega}}\En(\bB,\widehat{\Pi}_{(\bx',1)}),
$$
where, for $\bx$ in the closure of ${\widehat{\omega}}$, $\widehat{\Pi}_\bx$ denotes the tangent cone to $\widehat{\omega}$ at $\bx$.  
We note that, by translation invariance along the third coordinate, $\sE(\bB,\widehat{\omega})$ is also the infimum of ground energies when $\bx$ varies in the whole cylinder $\overline{\widehat{\omega}}$.

\begin{prop}
\label{P:liminfspecess}
 Let $\omega$ be a bounded polygonal domain of $\R^2$, and $\omega_{\a}$ defined by \eqref{E:defweps}.
 Then
$$\lim_{\a\to0}\En_{\ess}(\bB,\conea)= \sE(\bB,\widehat{\omega})>0.$$
\end{prop}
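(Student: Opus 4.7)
The plan is to exploit the identification \eqref{eq.Eness2}, which reduces the proposition to showing $\lim_{\a\to 0}\seE(\bB,\cC_{\omega_\a})=\sE(\bB,\widehat\omega)$, then to set up a natural correspondence between the tangent cones of the sharp cone $\cC_{\omega_\a}$ and those of the cylinder $\widehat\omega$, and finally to pass to the limit using continuity of $\Pi\mapsto\En(\bB,\Pi)$. I parameterize $\overline{\cC_{\omega_\a}}\cap\dS^2$ by $\bx'\in\overline\omega$ via the map $\bp(\a,\bx'):=(\a\bx',1)/\|(\a\bx',1)\|$. The tangent cone $\Pi_{\bp(\a,\bx')}$ depends only on the stratum of $\overline\omega$ containing $\bx'$: it is $\R^3$ when $\bx'\in\omega$; a half-space whose inward unit normal $\bn_s(\a)$ is computed explicitly when $\bx'$ lies in the interior of a side $s$ of $\partial\omega$; and a wedge of opening $\theta_v(\a)$ and edge direction $\bd_v(\a):=(\a v,1)/\|(\a v,1)\|$ when $\bx'=v$ is a vertex of $\omega$. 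An identical stratification applies to $\widehat\Pi_{(\bx',1)}$, with horizontal normals $\bn_s(0)$, vertical edge $\bd_v(0)=(0,0,1)^{\sf T}$, and opening $\theta_v(0)$ equal to the opening of $\omega$ at $v$.

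A direct computation yields $\bn_s(\a)=\bn_s(0)+\cO(\a)$, $\bd_v(\a)=\bd_v(0)+\cO(\a)$, and $\theta_v(\a)=\theta_v(0)+\cO(\a^2)$ as $\a\to 0$, so for each fixed $\bx'\in\overline\omega$ the tangent cone $\Pi_{\bp(\a,\bx')}$ converges geometrically to $\widehat\Pi_{(\bx',1)}$. I then invoke continuity of the ground energy under geometric deformation: for half-spaces, $\En(\bB,\R^3_+)$ is a continuous function of the angle between $\bB$ and the boundary plane, while for wedges, continuity with respect to the spatial orientation and to the opening follows from the magnetic wedge analysis in \cite{PoTh,Pof13T,Pop13} recalled in the introduction. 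Hence $\En(\bB,\Pi_{\bp(\a,\bx')})\to\En(\bB,\widehat\Pi_{(\bx',1)})$ as $\a\to 0$ for every $\bx'\in\overline\omega$. Combined with the monotonicity $\En(\bB,\cW)\le\En(\bB,\R^3_+)$ whenever $\cW\subset\R^3_+$ and with $\En(\bB,\Pi)\le\|\bB\|$ for every tangent cone considered (both following from bulk quasi-modes translated far away from the apex or edge), this forces lower semi-continuity of $\bx'\mapsto\En(\bB,\widehat\Pi_{(\bx',1)})$ on the compact set $\overline\omega$, and controls the joint limit $(\a,\bx')\to(0,\bx'_*)$.

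From here a standard $\Gamma$-type bracketing closes the limit. The upper bound is obtained by fixing $\bx'\in\overline\omega$, writing $\seE(\bB,\cC_{\omega_\a})\le\En(\bB,\Pi_{\bp(\a,\bx')})$, letting $\a\to 0$, and taking the infimum over $\bx'$. For the lower bound I select an almost-minimizing sequence $\bx'_\a\in\overline\omega$ for $\seE(\bB,\cC_{\omega_\a})$, extract by compactness a subsequence $\bx'_{\a_n}\to\bx'_*\in\overline\omega$, and conclude by the lower semi-continuity above that $\liminf_{\a\to 0}\seE(\bB,\cC_{\omega_\a})\ge\En(\bB,\widehat\Pi_{(\bx'_*,1)})\ge\sE(\bB,\widehat\omega)$. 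Finally, the strict positivity $\sE(\bB,\widehat\omega)>0$ follows from compactness of $\overline\omega$ together with the fact that each admissible ground energy is strictly positive: $\En(\bB,\R^3)=\|\bB\|$, $\En(\bB,\R^3_+)$ is bounded below by a positive multiple of $\|\bB\|$ uniformly in the orientation of the boundary plane, and $\En(\bB,\cW)>0$ for every vertex wedge of $\widehat\omega$.

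The main technical obstacle I anticipate is verifying the continuity of $\En(\bB,\cW)$ under deformation of the wedge (both its opening and its spatial orientation), since this governs the delicate dependence of the magnetic wedge spectrum on its geometric parameters. This has to be extracted carefully from the wedge spectral theory in \cite{PoTh,Pof13T,Pop13}, possibly through an intermediate quantitative estimate of the same flavor. The remaining geometric convergence and bracketing steps are then routine.
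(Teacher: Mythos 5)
Your strategy is essentially the paper's: reduce via \eqref{eq.Eness2} to comparing $\seE(\bB,\conea)$ with $\sE(\bB,\widehat{\omega})$, parametrize the section of $\conea$ by $\overline\omega$, identify the tangent cones stratum by stratum (full space, half-spaces, wedges), and pass to the limit using continuity of the half-space energy in the angle between $\bB$ and the boundary and of the wedge energy in its opening and field orientation (the latter imported from \cite{Pop13}). The difference is that the paper proves the stronger, quantitative Proposition~\ref{P:specesssharp}: it constructs an explicit linear isomorphism $\rL_{\bx,\a}$ between $\widehat\Pi_{(\bx',1)}$ and the tangent cone $\Pi_{\bp_\a}$ of $\conea$ with $\|\rL_{\bx,\a}-\Id\|\le C\a$ \emph{uniformly} in $\bx'\in\overline\omega$, and then uses the $\sC^1$ dependence of the half-space energy together with the $\a^{1/3}$-continuity of the wedge energy from \cite[Section 4.4]{Pop13}; this yields the rate $C(\omega)\,\a^{1/3}$ and dispenses with any compactness or semicontinuity argument. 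Your qualitative bracketing only gives the limit, which is all Proposition~\ref{P:liminfspecess} asks, but it forgoes the rate.

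There is, however, one step you must repair: the claimed ``monotonicity $\En(\bB,\cW)\le\En(\bB,\R^3_+)$ whenever $\cW\subset\R^3_+$'' is not a valid principle --- Neumann (and magnetic Neumann) problems have no domain monotonicity under inclusion. What is true, and what your translated-quasimode remark actually proves, is the tangent-substructure inequality $\En(\bB,\Pi)\le\seE(\bB,\Pi)=\inf_{\bp\in\overline\Pi\cap\dS^2}\En(\bB,\Pi_\bp)$ from \cite[Theorem 6.6]{BoDaPo14}: a wedge energy is dominated by the energies of the two half-spaces tangent along its faces (with the induced orientation of $\bB$), not by an arbitrary enclosing half-space. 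This corrected inequality is exactly what your lower-bound step needs: if the almost-minimizing points $\bx'_\a$ accumulate at a vertex $v$ while lying in $\omega$ or on an adjacent side, their energies tend to $\|\bB\|$ or to the energy of a face half-space of $\widehat\Pi_{(v,1)}$, each of which dominates $\En(\bB,\widehat\Pi_{(v,1)})$; alternatively, quote directly the lower semicontinuity of $\bx\mapsto\En(\bB,\Pi_\bx)$ established in \cite{BoDaPo14} instead of rederiving it. Finally, for the positivity $\sE(\bB,\widehat{\omega})>0$, note that there are only finitely many sides and vertices: the infimum is over $\|\bB\|$, half-space energies bounded below by $\Theta_0\|\bB\|$, and finitely many positive wedge energies (the paper simply cites \cite[Corollary 8.5]{BoDaPo14}; compare also \eqref{eq:wedgeinf}). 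With these substitutions your argument is complete.
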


Taking \eqref{eq:homoa} into account, as a direct consequence of Theorem \ref{prop.majo1} and Proposition \ref{P:liminfspecess}, we deduce:
\begin{cor}\label{cor.majo1}
Let $\omega$ be a bounded polygonal domain of $\R^2$ and $\bB$ be a constant magnetic field. 
For all $n\geq1$, for all $\a>0$, there holds
$$\En_{n}(\bB,\conea)\leq (4n-1) e(\bB,\omega)\a.$$
In particular, for $\a$ small enough, there exists an eigenvalue below the essential spectrum.
\end{cor}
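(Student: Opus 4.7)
The plan is to combine the two preceding statements in an essentially mechanical way. First I would apply Theorem \ref{prop.majo1} to the cone $\conea$, whose plane section is $\omega_\a = \a\omega$. This yields at once
$$\En_{n}(\bB,\conea) \leq (4n-1)\,e(\bB,\omega_\a).$$
Then I would invoke the homogeneity identity \eqref{eq:homoa}, namely $e(\bB,\omega_\a) = e(\bB,\omega)\,\a$, to rewrite the right-hand side as $(4n-1)\,e(\bB,\omega)\,\a$, which is the announced upper bound.

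For the second assertion, I would compare this upper bound with the behavior of the bottom of the essential spectrum. Specializing to $n=1$, we have $\En_{1}(\bB,\conea) \leq 3\,e(\bB,\omega)\,\a$, which tends to $0$ as $\a\to 0$. On the other hand, Proposition \ref{P:liminfspecess} asserts that $\En_{\ess}(\bB,\conea)$ converges to the strictly positive limit $\sE(\bB,\widehat{\omega})$. Hence there exists $\a_{0}>0$ such that, for every $\a\in(0,\a_{0})$,
$$\En_{1}(\bB,\conea) \leq 3\,e(\bB,\omega)\,\a < \tfrac{1}{2}\,\sE(\bB,\widehat{\omega}) \leq \En_{\ess}(\bB,\conea).$$
Recalling the min-max formula \eqref{Rayleigh} together with the identification \eqref{eq.Eness2}, this strict inequality prevents $\En_{1}(\bB,\conea)$ from being the bottom of the essential spectrum, so it must be a genuine discrete eigenvalue of $\OP(\bA,\conea)$ lying below $\En_{\ess}(\bB,\conea)$.

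The main obstacle in this corollary is not really an obstacle at all, since the substantive work is entirely absorbed into Theorem \ref{prop.majo1} and Proposition \ref{P:liminfspecess}. The only points worth verifying are that $\omega_\a$ meets the hypotheses of Theorem \ref{prop.majo1} for every $\a>0$, which is immediate as dilations preserve boundedness, openness and (for Proposition \ref{P:liminfspecess}) the polygonal character of $\omega$, and that the limit $\sE(\bB,\widehat{\omega})$ is indeed strictly positive, which is the nontrivial input supplied by Proposition \ref{P:liminfspecess}.
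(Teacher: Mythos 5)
Your argument is correct and is essentially the paper's own proof: apply Theorem~\ref{prop.majo1} to $\conea$, use the homogeneity \eqref{eq:homoa} to get $(4n-1)e(\bB,\omega)\a$, and then invoke Proposition~\ref{P:liminfspecess} together with the min-max principle to conclude that for $\a$ small the first Rayleigh quotient lies strictly below the essential spectrum and is therefore a discrete eigenvalue. Nothing further is needed.
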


\begin{remark}
It is far from being clear whether $(4n-1) e(\bB,\omega)\a$ can be the first term of an eigenvalue asymptotics, like this is the case for circular cones as proved in \cite{BoRay13,BoRay14}. 
\end{remark}

\subsubsection{Corner concentration in the semiclassical framework}
Let $\Omega\subset\R^3$ be a bounded simply connected corner domain in the sense of Definition~\ref{D:cornerdomains} (see \cite[Section 3]{BoDaPo14} for more details).
We denote by $\OP_{h}(\bA,\Omega)$ the Neumann realization of the Schr\"odinger operator $(-ih\nabla+\bA)^2$ on $\Omega$ with magnetic potential $\bA$ and semiclassical parameter $h$. 
Due to gauge invariance, its eigenvalues depend on the magnetic field $\bB=\curl\bA$, and not on the potential $\bA$, whereas the eigenfunctions do depend on $\bA$. We are interested in the first eigenvalue $\lambda_{h}(\bB,\Omega)$ of $\OP_{h}(\bA,\Omega)$ and in associated normalized eigenvector $\psi_{h}(\bA,\Omega)$. 

Let us briefly recall some of the results of \cite{BoDaPo14}, restricting the discussion to the case when the {\em magnetic field $\bB$ is constant} (and $\bA$ linear) for simplicity of exposition.
To each point $\bx\in\overline\Omega$ is associated with a dilation invariant, tangent open set $\Pi_\bx$, according to the following cases:
\begin{enumerate}
\item If $\bx$ is an interior point, $\Pi_\bx=\R^3$,
\item If $\bx$ belongs to a {\em face $\bff$} ({\it i.e.}, a connected component of the smooth part of $\partial\Omega$), $\Pi_\bx$ is a half-space,
\item If $\bx$ belongs to an {\em edge} $\be$, $\Pi_\bx$ is an infinite wedge,
\item If $\bx$ is a {\em vertex} $\bv$, $\Pi_\bx$ is an infinite cone.
\end{enumerate}
The \emph{local energy} $\En(\bB,\Pi_\bx)$ at $\bx$ is defined as the ground energy of the tangent operator $\OP(\bA,\Pi_\bx)$ and the {\em lowest local energy} is written as 
\begin{equation}
\label{eq:s}
   \sE(\bB \ee,\Omega) := \inf_{\bx\in\overline\Omega} \En(\bB,\Pi_\bx).
\end{equation}
Then \cite[Theorem 5.1 \& 9.1]{BoDaPo14} provides the general asymptotical bounds
\begin{equation}
\label{eq:11/10}
   |\lambda_{h}(\bB,\Omega) - h\sE(\bB,\Omega)|\leq 
   C \,h^{11/10} \quad \mbox{ as }\quad h\to0\ .  
\end{equation}
Let $\En_{\ess}(\bB,\Pi_\bx)$ be the bottom of the essential spectrum of $\OP(\bA,\Pi_\bx)$.
If there exists a vertex $\bv$ of $\Omega$ such that 
\begin{equation}\label{eq.condition}
   \sE(\bB,\Omega)=\En(\bB,\Pi_{\bv})<\En_{\ess}(\bB,\Pi_{\bv}),
\end{equation}
then there holds the improved upper bound 
$   \lambda_{h}(\bB,\Omega) \leq 
   h\sE(\bB,\Omega)+C\, h^{3/2}|\log h| $, see \cite[Theorem 9.1 (d)]{BoDaPo14}.
Finally, if the lowest local energy is attained at vertices only, in the following strong sense (here $\gV$ is the set of vertices of $\Omega$)
\begin{equation}\label{eq.condbis}
   \sE(\bB,\Omega)<\inf_{\bx\in\overline\Omega\setminus\gV} \En(\bB,\Pi_{\bx}),
\end{equation}
the first eigenvalue $\lambda_{h}(\bB,\Omega)$ has an asymptotic expansion as $h\to0$ ensuring the improved bounds
\begin{equation}
\label{eq:3/2}
   |\lambda_{h}(\bB,\Omega) - h\sE(\bB,\Omega)|\leq 
   C \,h^{3/2} \quad\mbox{  as }\quad h\to0\ , 
\end{equation}
and, moreover, the corresponding eigenfunction concentrates near the vertices $\bv$ such that $\sE(\bB,\Omega)=\En(\bB,\Pi_{\bv})$. This is an immediate adaptation of \cite{BoDa06} to the 3D case, see \cite[Section 12.1]{BoDaPo14}. In this framework, our result is now

\begin{prop}
\label{prop:cornercon}
Let $\omega$ be a bounded polygonal domain of $\R^2$, and $\omega_{\a}$ defined by \eqref{E:defweps}. 

\begin{enumerate}[a)]
\item Let $\big(\Omega(\varepsilon)\big)_\varepsilon$ be a family of 3D corner domains such that
\begin{enumerate}[i)]
\item One of the vertices $\bv(\varepsilon)$ of $\Omega(\varepsilon)$ satisfies $\Pi_{\bv(\varepsilon)}=\conea$,
\item The edge openings $\alpha_\bx$ of all domains $\Omega(\varepsilon)$ satisfy the uniform bounds
\begin{equation}
\label{eq:edgebound}
   \beta_0 \le \alpha_{\bx} \le 2\pi-\beta_0,\quad 
   \forall\bx\ \mbox{edge point of}\ \Omega(\varepsilon),\ \forall\varepsilon>0,
\end{equation} 
with a positive constant $\beta_0$.
\end{enumerate}
Then condition \eqref{eq.condbis} is satisfied for $\varepsilon$ small enough.

\item Families $\big(\Omega(\varepsilon)\big)_\varepsilon$ satisfying the above assumptions i) and ii) do exist.
\end{enumerate}
\end{prop}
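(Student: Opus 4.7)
The plan is to prove (a) by combining the upper bound of Corollary \ref{cor.majo1} at the sharp vertex $\bv(\varepsilon)$ with a uniform positive lower bound for the local energies at non-vertex points. Applying the corollary with $n=1$ at $\Pi_{\bv(\varepsilon)} = \conea$, we obtain $\sE(\bB, \Omega(\varepsilon)) \le \En(\bB, \Pi_{\bv(\varepsilon)}) \le 3\, e(\bB, \omega)\, \varepsilon$, which tends to $0$. Next I would bound $\inf_{\bx \in \overline{\Omega(\varepsilon)} \setminus \gV} \En(\bB, \Pi_\bx)$ from below, distinguishing interior points (where $\Pi_\bx = \R^3$ and $\En = \|\bB\|$), face points (where $\Pi_\bx$ is a half-space and $\En \ge \Theta_0 \|\bB\|$ with $\Theta_0$ the de Gennes constant), and edge points (where $\Pi_\bx$ is a wedge $\cW_{\alpha_\bx}$). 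In this last case, hypothesis ii) forces $\alpha_\bx \in [\beta_0, 2\pi - \beta_0]$; combining the continuity of the wedge ground energy in $(\alpha, \bB/\|\bB\|)$ established in \cite{Pop13, PoTh, Pof13T} with its strict positivity and the compactness of the parameter set yields a uniform positive lower bound. Minimizing these three bounds and choosing $\varepsilon$ small enough so that $3\, e(\bB, \omega)\, \varepsilon$ lies below this minimum then gives \eqref{eq.condbis}.

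For part (b), the natural construction is the pyramid $\Omega(\varepsilon) := \conea \cap \{0 < \rx_3 < 1\}$, with apex at the origin (whose tangent cone is $\conea$ by direct inspection, since the cone-defining relation only involves ratios $\rx_i/\rx_3$) and polygonal base $\omega_\varepsilon \times \{1\}$. Its edges split into lateral edges from the apex to the base vertices $(\varepsilon \bp_i, 1)$, and base edges lying in the plane $\rx_3 = 1$. Computing cross products of the relevant edge vectors shows that as $\varepsilon \to 0$, the dihedral angle at a lateral edge converges to the interior angle of $\omega$ at the corresponding vertex $\bp_i$, whereas the dihedral angle at a base edge converges to $\pi/2$, since the lateral faces become almost vertical in the sharp limit. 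Both limits lie in a compact subset of $(0, 2\pi)$, so continuity of the dihedral angles in $\varepsilon$ guarantees that for some $\varepsilon_0 > 0$ and some $\beta_0 > 0$, every edge opening of $\Omega(\varepsilon)$ belongs to $[\beta_0, 2\pi - \beta_0]$ for every $\varepsilon \in (0, \varepsilon_0]$, which is hypothesis ii).

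The main obstacle is the wedge step in part (a), namely the joint continuity and strict positivity of $\En(\bB, \cW_\alpha)$ on $[\beta_0, 2\pi - \beta_0]$ times the sphere of magnetic field directions. This is precisely what separates the ``dangerous'' sharp-wedge regime of \eqref{eq:wedge}, in which the wedge energy itself collapses to $0$, from the ``safe'' range of openings enforced by hypothesis ii); the required continuity and positivity are provided by the wedge analysis of \cite{Pop13, PoTh, Pof13T}.
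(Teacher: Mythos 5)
Your proposal is correct and follows essentially the same route as the paper: the upper bound $3\varepsilon\,e(\bB,\omega)$ at the sharp vertex, the three-case lower bound (interior, face, and wedges with openings in $[\beta_0,2\pi-\beta_0]$, handled via the continuity and positivity of the wedge ground energy from \cite{Pop13}), and the truncated cone $\conea\cap\{\rx_3<1\}$ for part b), whose lateral and base edge openings tend respectively to the angles of $\omega$ and to $\pi/2$. This matches the paper's argument in all essential points.
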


\subsubsection{Outline of the paper}
The paper is organized as follows: Sections~\ref{sec.majo}--\ref{sec.optim} are devoted to the proof of Theorem~\ref{prop.majo1}: To get an upper bound of $\En_{n}(\bB,\cone)$, we introduce in Section~\ref{sec.majo} a reduced operator on the half-line, depending on the chosen axis $\rx_3>0$, 
and introduce test functions for the reduced Rayleigh quotients. Then, in Section \ref{sec.optim}, we optimize the choice of the magnetic potential $\bA$ in order to minimize the reduced Rayleigh quotients. The obtained upper bounds are explicitly computed in some examples like discs and rectangles. In Section \ref{sec.esssp}, we focus on the essential spectrum for a sharp cone $\conea$ with polygonal section and prove Proposition \ref{P:specesssharp} that is a stronger form of Proposition~\ref{P:liminfspecess}. Section \ref{sec.appli} is devoted to the proof of Proposition \ref{prop:cornercon} that provides cases of corner concentration for the first eigenvectors of the semiclassical magnetic Laplacian. We conclude the paper in Section \ref{sec:rob} by a comparison with Robin problem.
Finally, for completeness, we recall in Appendix~\ref{SS:tangent} the recursive definition of corner domains.

\section{Upper bound for the first Rayleigh quotients using a 1D operator}\label{sec.majo}
The aim of the two following sections is to establish an upper bound of the $n$-th Rayleigh quotient $\En_{n}(\bB,\cone)$, valid for any domain $\omega$. 

For any constant magnetic potential $\bB$, we introduce the subspace
$$\cA(\bB)=\{\bA\in\mathcal{L}(\R^3) : \quad\partial_{\rx_{3}}\bA=0 \quad\mbox{and}\quad \nabla\times\bA=\bB\},$$
where $\mathcal{L}(\R^3)$ denotes the set of the endomorphisms of $\R^3$.
The set $\cA(\bB)$ is not empty and we can consider $\bA\in\cA(\bB)$. 
Let $\omega$ be a bounded polygonal domain. 
We evaluate now the quadratic form $q[\bA,\cone](\varphi)$ for functions $\varphi$ only depending on the $\rx_{3}$ variable. 
This leads to introduce a new quadratic form on some weighted Hilbert space. 

\begin{lem}\label{L:eval1d}
Let us introduce the weighted space 
$\sL^2_{\w}(\R_{+}):=\sL^2(\R_{+},\rx^2 \rd \rx)$ endowed with the norm $\|u\|_{\sL^2_{\w}(\R_{+})}:= \left(\int_{\R_{+}} |u(\rx)|^2 \rx^2 \rd \rx\right)^{1/2}$.
For any parameter $\lambda>0$, we define the quadratic form $\harm[\lambda]$ by
$$
   \harm[\lambda](u) = 
   \int_{\R_{+}}\left(|u'(\rx)|^2 +\lambda \rx^2|u(\rx)|^2\right)\rx^2 \rd \rx ,
$$ 
on the domain $\sB_{\w}(\R_{+}):=\{u\in \sL^2_{\w}(\R_{+}):\, \rx u\in \sL^2_{\w}(\R_{+}) , u'\in \sL^2_{\w}(\R_{+})\}$. \\
Let $\bA\in\cA(\bB)$ and $\varphi\in\sB_{\w}(\R_{+})$. Then the function $\cone\ni\bx\mapsto \varphi(\rx_{3})$, still denoted by $\varphi$, belongs to $\dom(q[\bA,\cone])$. Moreover there holds 
$$
   \frac{q[\bA,\cone](\varphi)}{\|\varphi\|^2_{\sL^2(\cone)}}=
   \frac{\harm\big[\lambda\big](\varphi)}{\|\varphi\|^2_{\sL^2_{\w}(\R_{+})}}
   \qquad \mbox{with}\qquad
   \lambda=\frac{\|\bA\|^2_{\sL^2(\omega)}}{|\omega|}.
$$
\end{lem}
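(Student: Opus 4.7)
The plan is to reduce everything to one-dimensional integrals by exploiting the two structural features at hand: the homothetic scaling of $\cone$ (whose horizontal slice at height $\rx_3>0$ is $\rx_3\omega$, of area $\rx_3^2|\omega|$) and the linearity of $\bA$, which gives the homogeneity $\bA(\rx_3 y)=\rx_3\bA(y)$ for $y\in\omega$. These two facts are the only ingredients needed; the rest is Fubini.

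First I would verify membership in the form domain. Fubini immediately gives
$$
\|\varphi\|_{\sL^2(\cone)}^2 \;=\; \int_0^\infty |\varphi(\rx_3)|^2\!\int_{\rx_3\omega}\!\rd\rx_1\rd\rx_2\,\rd\rx_3 \;=\; |\omega|\,\|\varphi\|_{\sL^2_{\w}(\R_+)}^2,
$$
which is finite since $\varphi\in\sL^2_{\w}(\R_+)$. To show $(-i\nabla+\bA)\varphi\in\sL^2(\cone)$, the same slicing argument combined with the homogeneity of $\bA$ reduces matters to integrability of $|\varphi'|^2\rx^2$ and of $|\bA|^2|\varphi|^2$, and the latter scales like $\rx^4|\varphi|^2$; both are controlled by the three conditions $\varphi,\ \rx\varphi,\ \varphi'\in\sL^2_{\w}(\R_+)$ built into $\sB_{\w}(\R_+)$.

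Next I would expand the integrand of $q[\bA,\cone](\varphi)$. Since $\partial_{\rx_1}\varphi=\partial_{\rx_2}\varphi=0$, one has pointwise
$$
|(-i\nabla+\bA)\varphi|^{2} \;=\; |\varphi'(\rx_3)|^{2} + |\bA(\rx_1,\rx_2)|^{2}|\varphi(\rx_3)|^{2} + 2\,\sfA_3(\rx_1,\rx_2)\,\Im\!\bigl(\varphi'\overline{\varphi}\bigr)(\rx_3).
$$
Because the bound \eqref{eq:Ee} is obtained by testing with real-valued trial functions, we may restrict to real $\varphi$, in which case $\Im(\varphi'\overline{\varphi})=0$ and the cross term disappears. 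Integrating the two remaining terms by Fubini, the change of variables $(\rx_1,\rx_2)=\rx_3(y_1,y_2)$, together with $\bA(\rx_3 y)=\rx_3\bA(y)$, yields
$$
\int_{\rx_3\omega}|\bA(\rx_1,\rx_2)|^{2}\rd\rx_1\rd\rx_2 = \rx_3^{4}\|\bA\|_{\sL^2(\omega)}^{2}, \qquad \int_{\rx_3\omega}\rd\rx_1\rd\rx_2 = \rx_3^{2}|\omega|.
$$

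Assembling the pieces,
$$
q[\bA,\cone](\varphi) = |\omega|\!\int_0^\infty |\varphi'(\rx_3)|^{2}\rx_3^{2}\rd\rx_3 + \|\bA\|_{\sL^2(\omega)}^{2}\!\int_0^\infty |\varphi(\rx_3)|^{2}\rx_3^{4}\rd\rx_3 = |\omega|\,\harm[\lambda](\varphi),
$$
with $\lambda=\|\bA\|_{\sL^2(\omega)}^{2}/|\omega|$. Dividing by $\|\varphi\|_{\sL^2(\cone)}^{2}=|\omega|\,\|\varphi\|_{\sL^2_{\w}(\R_+)}^{2}$ cancels the factor $|\omega|$ and produces exactly the claimed identity. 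The only subtle point is the treatment of the cross term, which disappears under the natural restriction to real test functions; apart from that, the proof is a bookkeeping exercise tracking the powers of $\rx_3$ produced by the conical scaling.
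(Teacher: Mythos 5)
Your proof is correct and takes essentially the same route as the paper: slice the cone (equivalently, change variables $(\rx_1,\rx_2)=\rx_3(y_1,y_2)$ with Jacobian $\rx_3^2$), use that $\bA$ is linear and independent of $\rx_3$ to get $\int_{\rx_3\omega}|\bA|^2=\rx_3^4\|\bA\|^2_{\sL^2(\omega)}$, and divide by $\|\varphi\|^2_{\sL^2(\cone)}=|\omega|\,\|\varphi\|^2_{\sL^2_{\w}(\R_+)}$. Your treatment of the cross term by restricting to real-valued $\varphi$ coincides with the paper's own proof, which likewise assumes $\varphi$ real so that the term $2\sfA_3\,\Im(\varphi'\overline{\varphi})$ drops out.
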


\begin{proof}
Let $\bA = (\sfA_{1},\sfA_{2},\sfA_{3})^{\sf T}\in\cA(\bB)$. 
Since $\varphi$ is real valued and depends only on the $\rx_{3}$ variable, we have
\begin{eqnarray*}
q[\bA,\cone](\varphi)
&=& \int_{\cone}|\sfA_{1}|^2|\varphi|^2+|\sfA_{2}|^2|\varphi|^2+|(-i\partial_{\rx_{3}}+\sfA_{3})\varphi|^2 \rd \bx\\
&=& \int_{\cone}|\bA(\bx)|^2|\varphi(\rx_{3})|^2+|\partial_{\rx_{3}}\varphi(\rx_{3})|^2 \rd \bx.
\end{eqnarray*}
Let us perform the change of variables
\begin{equation}\label{eq.chgvar}
\bX = (\rX_{1},\rX_{2},\rX_{3}) = \left(\frac{\rx_{1}}{\rx_{3}},\frac{\rx_{2}}{\rx_{3}},\rx_{3}\right).
\end{equation}
Since $\bA$ is linear and does not depends on $\rx_{3}$, we have
\begin{align*}
q[\bA,\cone](\varphi)
&= \int_{\omega\times\R^+}\Big(|\bA(\bX)|^2 \rX_{3}^2 |\varphi(\rX_{3})|^2
+|\varphi'(\rX_{3})|^2\Big)\rX_{3}^2\rd \bX\\
&= |\omega| \int_{\R^+} |\varphi'(\rX_{3})|^2 \rX_{3}^2 \rd \rX_{3}
+\|\bA\|^2_{\sL^2(\omega)}\int_{\R^+}|\varphi(\rX_{3})|^2 \rX_{3}^4\rd \rX_{3},
\end{align*}
and, with the same change of variables~\eqref{eq.chgvar}
\begin{eqnarray*}
\|\varphi\|^2_{\sL^2(\cone)}
&=& |\omega|\int_{\R^+} |\varphi(\rX_{3})|^2 \rX_{3}^2 \rd \rX_{3}.
\end{eqnarray*}
Thus the Rayleigh quotient writes
\begin{eqnarray*}
\frac{q[\bA,\cone](\varphi)}{\|\varphi\|^2_{\sL^2(\cone)}}
&=& \frac{\int_{\R^+} |\varphi'(\rX_{3})|^2 \rX_{3}^2 \rd \rX_{3}
+ \frac{\|\bA\|^2_{\sL^2(\omega)}}{|\omega|}\int_{\R^+}|\varphi(\rX_{3})|^2 \rX_{3}^4\rd \rX_{3}}
{\int_{\R^+}|\varphi(\rX_{3})|^2 X_{3}^2 \rd \rX_{3}},
\end{eqnarray*}
and we deduce the lemma.
\end{proof}

With Lemma \ref{L:eval1d} at hands, we are interested in the spectrum of the operator associated with the quadratic form $\harm[\lambda]$. Thanks to the change of function $u\mapsto U:=\rx u$, the weight is eliminated and we find by using an integration by parts that
\[
   \harm[\lambda](u) = \int_{\R_{+}}\left(|U'(\rx)|^2 +\lambda \rx^2|U(\rx)|^2\right) \rd \rx
   \quad\mbox{and}\quad
   \|u\|^2_{\sL^2_{\w}(\R_{+})} = \|U\|^2_{\sL^2(\R_{+})}.
\]
So we are reduced to an harmonic oscillator on $\R_{+}$ with Dirichlet condition at $0$. Its eigenvectors $U_n$ are the restrictions to $\R_{+}$ of the odd ones on $\R$. Therefore, see also \cite[Corollary C.2]{BoRay13}, we find that the eigenvalues of the operator associated with the form $\harm[\lambda]$ are simple and the $n$-th eigenvalue equals $\lambda^{1/2}(4n-1)$. 
Then, by combining the min-max principle with Lemma \ref{L:eval1d}, we deduce that the $n$-th eigenvalue of the operator associated with the form $q[\bA,\cone]$ is bounded from above by 
$(4n-1)\|\bA\|_{\sL^2(\omega)}/\sqrt{|\omega|}$. 
Since this upper bound is valid for any $\bA\in\cA(\bB)$, we have proved the following proposition.

\begin{prop}\label{th.majon}
Let $\bB$ be a constant magnetic field. Then for all $n\in \N^{*}$,  
we have
\begin{equation}\label{eq.dlalpha}
\En_{n}(\bB,\cone)\leq \frac{4n-1}{\sqrt{|\omega|}}\inf_{\bA\in\cA(\bB)}\|\bA\|_{\sL^2(\omega)},
\end{equation}
 with 
 $$\cA(\bB)=\{\bA\in\mathcal{L}(\R^3) : \quad\partial_{\rx_{3}}\bA=0 \quad\mbox{ and } \quad\nabla\times\bA=\bB\}.$$
 \end{prop}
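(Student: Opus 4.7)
The plan is to use the variational characterization of $\En_n(\bB,\cone)$ via min-max with a well-chosen $n$-dimensional test space of functions depending only on the axial variable $\rx_3$. For such functions Lemma~\ref{L:eval1d} already asserts that the 3D Rayleigh quotient on $\cone$ coincides with the 1D Rayleigh quotient on $\sL^2_\w(\R_+)$ attached to the form $\harm[\lambda]$, with $\lambda=\|\bA\|^2_{\sL^2(\omega)}/|\omega|$. Everything thus reduces to identifying the first $n$ eigenvalues of $\harm[\lambda]$ and then reading off the desired upper bound.

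The first step is to diagonalize $\harm[\lambda]$ on the weighted space $\sL^2_\w(\R_+)$. I would remove the weight via the isometry $u\mapsto U:=\rx u$ from $\sL^2_\w(\R_+)$ onto $\sL^2(\R_+)$; an integration by parts converts $\harm[\lambda]$ into the standard quadratic form of the Dirichlet harmonic oscillator $-\partial_\rx^2+\lambda \rx^2$ on $\sH^1_0(\R_+)$. Its eigenfunctions are the restrictions to $\R_+$ of the odd Hermite eigenfunctions of the full-line oscillator, so the spectrum is simple and the $n$-th eigenvalue equals $(4n-1)\sqrt{\lambda}$.

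In the second step I would choose for test space the span $V_n$ of the functions $\bx\mapsto u_k(\rx_3)$, $k=1,\dots,n$, where $u_1,\dots,u_n$ are the first $n$ eigenfunctions of the operator associated with $\harm[\lambda]$. By Lemma~\ref{L:eval1d}, each such function lies in $\dom(q[\bA,\cone])$ and the 3D Rayleigh quotient of any linear combination equals the corresponding 1D Rayleigh quotient, hence is bounded by $(4n-1)\sqrt{\lambda}$; linear independence of $V_n$ follows from the injectivity of the lifting $\varphi\mapsto\varphi(\rx_3)$. The min-max principle then yields $\En_n(\bB,\cone)\le(4n-1)\|\bA\|_{\sL^2(\omega)}/\sqrt{|\omega|}$, and taking the infimum over $\bA\in\cA(\bB)$ gives \eqref{eq.dlalpha}. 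The only delicate point is checking that the test functions lift correctly into the 3D form domain and that the min-max inequality is applied to a genuinely $n$-dimensional subspace; both are immediate consequences of Lemma~\ref{L:eval1d}, so no real obstacle remains and the bulk of the work has in fact already been carried out in that lemma.
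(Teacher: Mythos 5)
This is exactly the paper's proof: the paper likewise evaluates the form on functions of $\rx_3$ via Lemma~\ref{L:eval1d}, removes the weight by $u\mapsto U=\rx u$ to identify the spectrum of $\harm[\lambda]$ as that of the Dirichlet half-line harmonic oscillator with simple eigenvalues $(4n-1)\sqrt{\lambda}$, and then concludes by "combining the min-max principle with Lemma~\ref{L:eval1d}" and taking the infimum over $\bA\in\cA(\bB)$. Your only addition is to make explicit the $n$-dimensional test space of lifted eigenfunctions that the paper leaves implicit, so the proposal is correct and essentially identical to the paper's argument.
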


\section{Optimization}\label{sec.optim}
The aim of this section is to give an explicit solution to the optimization problem 
\begin{equation}\label{eq.pbopt3D}
 \mbox{\sl Find \ $\bA_0\in\cA(\bB)$ \ such that}\quad 
   \|\bA_0\|_{\sL^2(\omega)} = \inf_{\bA\in\cA(\bB)}\|\bA\|_{\sL^2(\omega)},
\end{equation}
for a constant magnetic field $\bB = (\sfB_{1},\sfB_{2}, \sfB_{3})^{\sf T}$. We also provide explicit examples  in the case where the domain $\omega$ is a disc or a rectangle. 
\subsection{Resolution of the optimization problem and proof of Theorem \ref{prop.majo1}}
Let $\bA= (\sfA_{1},\sfA_{2},\sf A_{3})^{\sf T}\in \cA(\bB)$. Since $\bA$ is independent of the $\rx_{3}$ variable, we have
$$\curl \bA = \begin{pmatrix}\partial_{\rx_{2}} \sfA_{3}\\ -\partial_{\rx_{1}} \sfA_{3}\\ \partial_{\rx_{1}}\sfA_{2}-\partial_{\rx_{2}} \sfA_{1}\end{pmatrix}
=\begin{pmatrix} \sfB_{1}\\ \sfB_{2}\\ \sfB_{3}\end{pmatrix}.$$
By linearity of $\bA$, we have necessarily $\sfA_{3}(\bx)=\sfB_{1} \rx_{2}-\sfB_{2}\rx_{1}$. Therefore considering
$$\cA' = \{\bA'\in\mathcal{L}(\R^2) : \quad\nabla_{\rx_{1},\rx_{2}}\times\bA'=1\},$$
the infimum in \eqref{eq.pbopt3D} rewrites
\begin{equation}
\label{E:decomposinf}
\inf_{\bA\in\cA(\bB)}\|\bA\|_{\sL^2(\omega)}=
\left(\sfB_{3}^2\inf_{\bA'\in\cA'} \|\bA'\|^2_{\sL^2(\omega)} + \int_{\omega}(\sfB_{1} \rx_{2}-\sfB_{2}\rx_{1})^2\rd \rx_{1}\rd \rx_{2}\right)^{1/2},
\end{equation}
and 3D optimization problem \eqref{eq.pbopt3D} can be reduced to a 2D one:
\begin{equation}
\label{E:Minim2d}
\mbox{\sl Find \ $\bA'_0\in\cA'$ \ such that}\quad 
   \|\bA'_0\|_{\sL^2(\omega)} =
   \inf_{\bA'\in\cA'} \|\bA'\|_{\sL^2(\omega)}.
\end{equation}
This problem can be solved explicitly:
\begin{prop}
\label{P:minim2d}
For $k=0,1,2$, we define the moments
$$M_{k}:=\int_{\omega}\rx_{1}^{k}\rx_{2}^{2-k}\rd \rx_{1} \rd \rx_{2}.$$
Then, we have
$$\inf_{\bA'\in\cA'} \|\bA'\|_{\sL^2(\omega)}^2
=\frac{M_{0}M_{2}-M_{1}^2}{M_{0}+M_{2}}.$$
Moreover the minimizer of \eqref{E:Minim2d} exists, is unique, and given by 
$$
 \bA'_{0}(\rx_{1},\rx_{2})=\frac{1}{M_{0}+M_{2}}\begin{pmatrix} M_{1} & -M_{0} \\ M_{2} & -M_{1} \end{pmatrix}\begin{pmatrix} \rx_{1} \\ \rx_{2} \end{pmatrix}.
$$
\end{prop}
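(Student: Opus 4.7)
The plan is to reduce \eqref{E:Minim2d} to a finite-dimensional quadratic minimization problem. Since any element of $\cA'$ is a linear endomorphism of $\R^{2}$, I would write
\[
  \bA'(\rx_{1},\rx_{2}) = \bigl(a\rx_{1}+b\rx_{2},\; c\rx_{1}+d\rx_{2}\bigr)^{\sf T},
\]
and impose the curl constraint $\partial_{\rx_{1}}\sfA'_{2}-\partial_{\rx_{2}}\sfA'_{1}=c-b=1$. After eliminating $c=1+b$, the set $\cA'$ is parametrized by $(a,b,d)\in\R^{3}$, and using the moment notation of the statement, the squared $\sL^2(\omega)$ norm becomes the explicit quadratic polynomial
\[
  F(a,b,d) = a^{2}M_{2}+2abM_{1}+b^{2}M_{0}+(1+b)^{2}M_{2}+2(1+b)dM_{1}+d^{2}M_{0}.
\]

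Next, I would write the three stationarity equations $\partial_a F = \partial_b F = \partial_d F = 0$. The equations $\partial_a F = 0$ and $\partial_d F = 0$ give $a = -bM_1/M_2$ and $d = -(1+b)M_1/M_0$, which I would substitute into $\partial_b F = 0$. After multiplying by $M_0 M_2$, the resulting identity factors as
\[
  (M_{0}M_{2}-M_{1}^{2})\bigl(bM_{0}+(1+b)M_{2}\bigr)=0,
\]
and since $\omega$ is an open subset of $\R^{2}$, the Cauchy--Schwarz inequality applied to $\rx_{1},\rx_{2}\in\sL^2(\omega)$ is strict, yielding $M_{0}M_{2}-M_{1}^{2}>0$. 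The second factor then forces $b=-M_{2}/(M_{0}+M_{2})$, and back-substitution produces explicit values for $a$, $c=1+b$, and $d$ which I would rearrange to match the claimed matrix formula for $\bA'_{0}$.

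Uniqueness of the minimizer follows because $F$ is a quadratic polynomial in $(a,b,d)$ whose homogeneous part is a positive semidefinite quadratic form; a direct computation of the Hessian (or noting that the critical point is unique by the linear algebra above) shows it is actually positive definite under $M_{0}M_{2}-M_{1}^{2}>0$. Existence of a minimizer is then automatic since $F\to +\infty$ at infinity on $\R^{3}$. Finally, I would evaluate $F$ at the critical point; by the computation
\[
  \int_{\omega}(M_{1}\rx_{1}-M_{2}\rx_{2})^{2}\rd\rx=M_{2}(M_{0}M_{2}-M_{1}^{2}),
  \quad
  \int_{\omega}(M_{0}\rx_{1}-M_{1}\rx_{2})^{2}\rd\rx=M_{0}(M_{0}M_{2}-M_{1}^{2}),
\]
the two contributions sum to $(M_{0}+M_{2})(M_{0}M_{2}-M_{1}^{2})$, and division by $(M_{0}+M_{2})^{2}$ gives the announced value $(M_{0}M_{2}-M_{1}^{2})/(M_{0}+M_{2})$.

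The only delicate issue is the strict positivity $M_{0}M_{2}>M_{1}^{2}$, which is what prevents degeneracy of the system and what turns the unique critical point into a genuine minimum; this is the one place where the hypothesis that $\omega$ is a two-dimensional (nonempty open) set, rather than being concentrated on a line through the origin, really enters. Everything else is elementary calculus of several variables and bookkeeping with the matrix formula; the only temptation I would resist is invoking convex duality or Helmholtz-type decompositions, since the direct $3\times 3$ calculation is shorter and yields the explicit formula with no extra work.
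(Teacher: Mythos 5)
Your method is the same as the paper's: eliminate the curl constraint to parametrize $\cA'$ by three real parameters, write the squared norm as the explicit quadratic $F$, and solve the stationarity equations; the paper packages existence and uniqueness slightly differently (it observes that $\cA'$ is a $3$-dimensional affine hyperplane of $\cL(\R^2)$ for the $\sL^2(\omega)$ scalar product, so the minimizer is the orthogonal projection of the origin), while you argue via coercivity and a positive definite Hessian. Both are valid, and your explicit appeal to strict Cauchy--Schwarz for $M_0M_2-M_1^2>0$ (which also justifies dividing by $M_0$ and $M_2$) is exactly the right point to make; your factorization and your evaluation of the minimum $(M_0M_2-M_1^2)/(M_0+M_2)$ are correct.

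One step as written would fail, and it is worth being precise about why: your (correct) back-substitution gives $b=-M_2/(M_0+M_2)$, hence $c=1+b=M_0/(M_0+M_2)$, $a=M_1/(M_0+M_2)$, $d=-M_1/(M_0+M_2)$, i.e. the minimizer
\[
  \bA'_0(\rx_1,\rx_2)=\frac{1}{M_0+M_2}\begin{pmatrix} M_1 & -M_2\\ M_0 & -M_1\end{pmatrix}\begin{pmatrix}\rx_1\\ \rx_2\end{pmatrix},
\]
and this does \emph{not} ``rearrange to match'' the matrix displayed in the Proposition, whose off-diagonal entries are $-M_0$ and $M_2$. The displayed formula (and the corresponding solution $(\alpha,\beta,\gamma)=(M_1,-M_0,-M_1)/(M_0+M_2)$ in the paper's own proof) has the off-diagonal moments swapped: at that point $\partial_a F=2M_1(M_2-M_0)/(M_0+M_2)\neq0$ in general, and its squared norm equals $\frac{M_0M_2-M_1^2}{M_0+M_2}+\frac{(M_0-M_2)^2}{M_0+M_2}$, strictly larger than the stated infimum unless $M_0=M_2$. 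So you should state the corrected minimizer rather than assert agreement with the printed one; note that the value of the infimum, hence Theorem~\ref{prop.majo1} and the disc example (where $M_0=M_2$ and $M_1=0$), are unaffected, and that your closing evaluation of $F$ at the critical point is consistent with the corrected matrix, not with the printed one.
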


\begin{remark}
a) Let us notice that 
$$M_{0} M_{2}-M_{1}^2
= \frac 12 \int_{\omega} \int_{\omega} (\rx_{1} \rx_{2}'-\rx_{1}' \rx_{2})^2 \rd\rx_{1} \rd\rx_{2} \rd\rx_{1}' \rd\rx_{2}'. 
$$ 
This relation highlights once more the connection with he geometry of $\omega$. 

b) The divergence of the optimal transverse potential $\bA'_0$ is $0$, just as the full associated potential $\bA_0$.
\end{remark}

\begin{proof}
Let us introduce the space of linear applications of the plane $\cL(\R^{2})$ endowed with the scalar product
$$\langle f,g \rangle_{\sL^2(\omega)}=\int_{\omega} f(\rx_{1},\rx_{2})\cdot g(\rx_{1},\rx_{2}) \rd \rx_{1}\rd\rx_{2},\qquad \forall f,g\in \cL(\R^{2}).$$
Then $\cA'$ is an affine hyperplane of $\cL(\R^{2})$ of dimension 3, and Problem \eqref{E:Minim2d} is equivalent to find the distance from the origin $\bfz$ to this hyperplane. In particular there exists a unique minimizer
to \eqref{E:Minim2d}, which is the orthogonal projection of $\bfz$ to $\cA'$.
To make the solution explicit, we look for a linear function $\bA'_{0}\in \cA'$ of the form 
$$\bA'_{0} (\rx_{1},\rx_{2})
=\begin{pmatrix} \alpha & \beta \\ 1+\beta & \gamma \end{pmatrix}\begin{pmatrix} \rx_{1} \\ \rx_{2} \end{pmatrix},
$$
where $(\alpha,\beta,\gamma)$ are to be found. 
Then we have
\begin{align*}
F(\alpha,\beta,\gamma) &:= \|\bA'_{0}\|^2_{\sL^2(\omega)}
= \int_{\omega} (\alpha \rx_{1}+\beta \rx_{2})^2+((1+\beta) \rx_{1}+\gamma \rx_{2})^2 \rd \rx_{1} \rd \rx_{2}\\
&= M_{2}(\alpha^2+(1+\beta)^2) + 2M_{1}(\alpha\beta+(1+\beta)\gamma) + M_{0}(\beta^2+\gamma^2).
\end{align*}
Solving $\nabla F = 0$ gives a unique solution
$$
(\alpha,\beta,\gamma) = \frac{1}{M_{0}+M_{2}}(M_{1}, -M_{0} ,-M_{1}),
$$
and computations provide
$$\|\bA'_{0}\|^2_{\sL^2(\omega)}= \frac{M_{0}M_{2}-M_{1}^2}{M_{0}+M_{2}}.$$
We deduce the proposition.
\end{proof}
\begin{proofof}{Theorem~\ref{prop.majo1}}
Now, combining Proposition \ref{th.majon}, \eqref{E:decomposinf} and Proposition \ref{P:minim2d}, we get the upper bound 
$$\En_{n}(\bB,\cone)\leq (4n-1)e(\bB,\cone),$$
with
\begin{align*}
e(\bB,\omega)
   &=\frac{1}{\sqrt{|\omega|}}\left(\sfB_{3}^2\frac{M_{0}M_{2}-M_{1}^2}{M_{0}+M_{2}}
   +\int_\omega (\rx_{1}\sfB_{2}-\sfB_{2}\rx_{1})^2 \rd \rx_{1} \rd \rx_{2} \right)^{1/2} \\ 
   & =\frac{1}{\sqrt{|\omega|}}\left(\sfB_{3}^2\frac{M_{0}M_{2}-M_{1}^2}{M_{0}+M_{2}}
   + \sfB_{2}^2M_{2} + \sfB_{1}^2M_{0} -2 \sfB_{1}\sfB_{2}M_{1} \right)^{1/2} \\
   & =\left(\sfB_{3}^2\frac{m_{0}m_{2}-m_{1}^2}{m_{0}+m_{2}}
   + \sfB_{2}^2m_{2} + \sfB_{1}^2m_{0} -2 \sfB_{1}\sfB_{2}m_{1} \right)^{1/2},
\end{align*}
with $m_{k}=M_{k}/|\omega|$, and we deduce Theorem \ref{prop.majo1}.
\end{proofof}

\begin{proofof}{Lemma~\ref{lem.homog}}
Let us discuss the quantities appearing in $e(\bB,\omega)$:
\begin{itemize}
\item The coefficient $m_{0}m_{2}-m_{1}^2$ corresponds to a Gram determinant, and is positive by Cauchy-Schwarz inequality. 
\item The coefficient $m_{0}+m_{2}=\frac{1}{|\omega|}\int_{\omega}(\rx_{1}^2+\rx_{2}^2) \rd \rx_{1} \rd \rx_{2}$ is the isotropic moment of order 2 in $\omega$.
\item When $(\sfB_{1},\sfB_{2})\neq0$, we denote by $\Delta\subset \R^2$ the line borne by the projection of the magnetic field in the plane $\{\rx_{3}=0\}$. Then the quantity 
$$\int_\omega (\sfB_{2}\rx_{1}-\sfB_{1}\rx_{2})^2 \rd \rx_{1} \rd \rx_{2}$$
 is the square of the $\sL^2$ norm (in $\omega$) of the distance to $\Delta$.
\end{itemize}
Consequently, the function $\bB\mapsto e(\bB,\omega)$ is a norm on $\R^{3}$. Furthermore, although the normalized moments depend on the choice of Cartesian coordinates in $\R^2$, the above three points show that this is not the case for the three quantities $m_{0}+m_{2}$, $m_{2}m_{0}-m_{1}^2$ and $b_{2}^2m_{2} + b_{1}^2m_{0} -2 b_{1}b_{2}m_{1}$. We deduce that the constant $e(\bB,\omega)$ depends only on the magnetic field and the domain and not on the choice of Cartesian coordinates. Lemma \ref{lem.homog} is proved.
\end{proofof}

\subsection{Examples}
In this section we apply Proposition \ref{P:minim2d} to particular geometries, namely discs and rectangles.
\subsubsection{Circular cone}\label{subsec.circular}
The case of a right circular cone is already considered in \cite{BoRay13,BoRay14}, and we compare our upper bound given in Theorem~\ref{prop.majo1} with the existing results.

For any disc $\omega$ centered at the origin, the normalized moments equal
$$
m_{0}=m_{2}=\frac{|\omega|}{4\pi} \quad \mbox{and} \quad m_{1}=0,  
$$
so that  
Theorem~\ref{prop.majo1} gives
\begin{equation} \label{eq:disc}
\En_{n}(\bB,\cone)\leq
(4n-1)e(\bB,\omega)
= \frac{4n-1}{2}\sqrt{\frac{|\omega|}\pi}\left( \frac{\sfB_{3}^2}2 +\sfB_{1}^2 + \sfB_{2}^2 \right)^{1/2}.
\end{equation}
In \cite{BoRay13,BoRay14}, the right circular cone $\cC^\circ_\alpha$ with opening $\alpha$ is considered: Here $\omega$ is the disc centered at the origin with radius $\tan\frac{\alpha}{2}$. In this case, a complete asymptotic expansion is established as $\alpha\to 0$ and the first term is given by
\begin{equation}\label{eq.lima}
\lim_{\alpha\to 0}\frac{\En_{n}(\bB,\cC^\circ_\alpha)}{\alpha} = \frac{4n-1}{2^{5/2}}\sqrt{1+\sin^2\beta},
\end{equation}
where $\beta$ is the angle between the magnetic field $\bB$ and the axis of the cone.
Let us compare with our upper bound \eqref{eq:disc}, applied with $\bB= (0, \sin \beta,\cos\beta)^{\sf T}$ and $|\omega|=\pi\tan^2\frac{\alpha}{2}$. This provides:
$$
 \forall \alpha\in (0,\pi), \quad \En_{n}(\bB,\cC^\circ_\alpha)\leq \frac{4n-1}{2^{3/2}}\tan\frac{\alpha}{2} \sqrt{1+\sin^2\beta}.
$$
In view of \eqref{eq.lima}, this upper bound is optimal asymptotically, as $\alpha\to0$.
Let us notice that the solution of the minimization problem \eqref{E:Minim2d} is in that case the so called symmetric potential
 $\bA'_{0} = \frac12 \left(-\rx_{2},\rx_{1}\right)^{\sf T}$ (see Proposition \ref{P:minim2d}).

\subsubsection{Rectangular cone}
Let us assume that $\omega$ is the rectangle $[\ell_{a},\ell_{b}]\times[L_{a},L_{b}]$. \\
The moments of order 2 can be computed explicitly:
\begin{align*}
m_{0} &=\frac{(\ell_{b}-\ell_{a})(L_{b}^3-L_{a}^3)}{3|\omega|} = \frac 13(L_{b}^2+L_{b}L_{a}+L_{a}^2),\\ 
m_{1} &= \frac{(\ell_{b}^2-\ell_{a}^2)(L_{b}^2-L_{a}^2)}{4|\omega|} = \frac 14(\ell_{b}+\ell_{a})(L_{b}+L_{a}),\\
m_{2} &=\frac{(\ell_{b}^3-\ell_{a}^3)(L_{b}-L_{a})}{3|\omega|} = \frac 13(\ell_{b}^2+\ell_{b}\ell_{a}+\ell_{a}^2).
\end{align*}
Let us apply Theorem~\ref{prop.majo1} in several configurations. 
Note that if $\ell_{a}=-\ell_{b}$ or $L_{a}=-L_{b}$ (which means that we have a symmetry), then $m_{1}=0$ and 
$$\En_{n}(\bB,\cone)\leq 
{(4n-1)} \left(\sfB_{3}^2\frac{m_{0}m_{2}}{m_{0}+m_{2}} +\sfB_{1}^2 m_{0} + \sfB_{2}^2 m_{2}\right)^{1/2}.$$
Assuming, both $\ell_{a}=-\ell_{b}$ and $L_{a}=-L_{b}$, we obtain the following upper bound for the ground state energy for the rectangle $[-\ell,\ell]\times[-L,L]$ (for shortness, $\ell=\ell_{b}$ and $L=L_{b}$):
\begin{equation}\label{eq.rectlL}
   \En_{n}(\bB,\cone)\leq  \frac{4n-1}{\sqrt 3}   \left(\sfB_{3}^2\frac{\ell^2 L^2}{\ell^2 + L^2} +\sfB_{1}^2 L^2 + \sfB_{2}^2 \ell^2 \right)^{1/2}.
\end{equation}
In the case of a symmetric rectangle of proportions $\ell<L=1$, the last formula becomes
\[
   \En_{n}(\bB,\cone)\leq  
    \frac{4n-1}{\sqrt 3}\left(\sfB_{3}^2\frac{\ell^2}{\ell^2 + 1} + \sfB_{1}^2 + \sfB_{2}^2 \ell^2 \right)^{1/2}.
\]
We observe that this upper bound does not converge to 0 when $\sfB_{1}\neq0$ and $\ell$ tends to $0$. In contrast when $\sfB_{1}=0$ there holds
\[
   \En_{n}(\bB,\cone) \leq \frac{4n-1}{\sqrt 3} \,\ell\,
   \left(\frac{\sfB_{3}^2}{\ell^2 + 1} + \sfB_{2}^2 \right)^{1/2},
\]
which tends to $0$ as $\ell\to0$. This configuration ($\sfB_{1}=0$ and $\ell\to 0$) means that $\bB$ is almost tangent to the cone $\cone$ in the direction where it is not sharp. This can be compared with the result \eqref{eq:wedge} on wedges.
This shows the anisotropy of the quantities appearing in our upper bounds.

For the square $[-\ell,\ell]^2$, we deduce the upper bound of the first eigenvalue
\begin{equation}\label{eq:square}
\En_{n}(\bB,\cone)
\leq \frac{4n-1}{\sqrt 3}\ell \left(\frac{\sfB_{3}^2}2 +\sfB_{1}^2 + \sfB_{2}^2 \right)^{1/2}  \!\!\!
=\frac{4n-1}{2} \frac{\sqrt{|\omega|}}{\sqrt 3}\left( \frac{\sfB_{3}^2}2 +\sfB_{1}^2 + \sfB_{2}^2 \right)^{1/2}\!\!\!.
\end{equation}
\begin{rem}
Assuming that $|\omega|$ is set, our upper bounds in the case when $\omega$ is a square or a disc can be compared, see \eqref{eq:disc} 
and \eqref{eq:square}. The distinct factors are
\[
	\frac{1}{\sqrt\pi} \simeq 0.5642\quad\mbox{and} \quad
	\frac{1}{\sqrt 3} \simeq 0.5774.
\]
\end{rem}

%%%%%%%%%%%%%%%%%%%%%%%%%%%%
\section{Essential spectrum for cones of small apertures with polygonal section}\label{sec.esssp}
%%%%%%%%%%%%%%%%%%%%%%%%%%%%
Here we consider the case of a family of cones parametrized by a model plane polygonal domain $\omega\subset \R^2$ and the scaling factor $\a>0$. We characterize the limit of the bottom of the essential spectrum $\En_{\ess}(\bB,\conea)$ as $\a\to0$, where $\conea$ is defined in \eqref{E:defweps}. The main result of this section is Proposition \ref{P:specesssharp}, which is a stronger version of Proposition \ref{P:liminfspecess}.

In such a situation, relations \eqref{eq.Eness}--\eqref{eq.Eness2} take the form
$$
   \En_{\ess}(\bB,\conea)=\seE(\bB,\conea)=\inf_{\bp\,\in\,\overline{\conea} \cap \dS^{2}}\En(\bB,\Pi_{\bp}).
$$
We define the bijective transformation $\rP: \omega\times\R_+ \to \cC_{\omega}$  by 
\begin{equation}
\label{D:projrP}
\rP(\bx',t)= t\,\frac{(\bx',1)}{\|(\bx',1)\|},\qquad\forall (\bx',t)\in\omega\times\R_+.  
\end{equation}
Notice that $\bx'\mapsto \rP(\bx',1)$ defines a bijection from $\R^{2}$ onto the upper half sphere $\dS^{2}_{+}:=\{\bp \in \dS^{2},\ {\mathsf p}_{3}>0\}$, and that for all $\a>0$, $\rP(\a\omega,1)$ is an open set of $\dS^{2}_{+}$ and coincides with $\conea \cap \dS^{2}$.

If $\bp$ is a vertex of $\conea \cap \dS^{2}$, then $\bx'=\rP(\cdot,1)^{-1}(\bp)$ is still a vertex of $\omega_{\a}$, but its opening angle is not the same as for $\bp$, in particular the tangent cones $\Pi_{\bp}$ and $\widehat{\Pi}_{\bx'}$ are both wedges, but they cannot be deduced each one from another by a rotation, and in general the ground state energies on these two domains are different.

The following proposition estimates the difference between the ground state energies as $\a \to 0$:

\begin{prop}
\label{P:specesssharp}
There exist positive constants $\a_0$ and $C(\omega)$ depending only on $\omega$ such that
\begin{equation}
\label{E:specesssharp}
   \forall\a\in(0,\a_{0}),\qquad
   |\seE(\bB,\conea)-\sE(\bB,\widehat{\omega})| \le C(\omega)\,\a^{1/3}.
\end{equation}
In particular, $\lim_{\a\to0} \seE(\bB,\conea)=\sE(\bB,\widehat{\omega})$.
\end{prop}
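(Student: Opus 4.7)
The plan is to reduce \eqref{E:specesssharp} to a uniform pointwise comparison of local ground energies on tangent substructures, and then take infima. The map $\bx'\mapsto\bp_\a(\bx'):=\rP(\a\bx',1)$ is a bijection from $\overline\omega$ onto $\overline\conea\cap\dS^{2}$ that preserves the stratification into interior points, open faces, and vertices. My target is the uniform pointwise estimate
\[
   \bigl|\En(\bB,\Pi_{\bp_\a(\bx')})-\En(\bB,\widehat\Pi_{(\bx',1)})\bigr|\le C(\omega)\,\a^{1/3}
   \qquad \forall\,\bx'\in\overline\omega,
\]
from which \eqref{E:specesssharp} follows by taking $\inf_{\bx'}$ on both sides and invoking \eqref{eq.Eness}.

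First, I would carry out a geometric comparison of $\Pi_{\bp_\a(\bx')}$ and $\widehat\Pi_{(\bx',1)}$. At interior points both are $\R^{3}$. For a point $\bx'$ on an open face of $\omega$, a computation of outward normals shows that the half-space $\Pi_{\bp_\a(\bx')}$ is the image of the cylindrical half-space $\widehat\Pi_{(\bx',1)}$ under a rotation $R_\a(\bx')\in\mathrm{SO}(3)$ with $\|R_\a(\bx')-\Id\|=O(\a)$. At a vertex $\bx'_{0}$ of planar opening $\theta$, a cross-product computation on the generators of the two adjacent faces of $\conea$ shows that the corresponding dihedral opening equals $\theta+O(\a^{2})$, while the edge direction differs from the vertical edge of $\widehat\Pi_{(\bx'_{0},1)}$ by an $O(\a)$ rotation. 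Thus in all cases $\Pi_{\bp_\a(\bx')}$ is obtained from $\widehat\Pi_{(\bx',1)}$ by an $O(\a)$ rotation of the ambient space plus an $O(\a^{2})$ perturbation of the wedge opening.

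Second, using the rotation invariance $\En(R\bB,R\Pi)=\En(\bB,\Pi)$ to absorb the rotation into a perturbation of the magnetic field, the comparison reduces to bounding $|\En(\bB',W_{\theta'})-\En(\bB,W_{\theta})|$ for two half-spaces, respectively two wedges, with $|\bB-\bB'|=O(\a)$ and $|\theta-\theta'|=O(\a^{2})$. For half-spaces the ground energy depends analytically on the angle between $\bB$ and the boundary (see \cite{Pan02,HeMo02}), which immediately yields an $O(\a)$ bound. For wedges I would appeal to the quasi-mode constructions and continuity statements in \cite{PoTh,Pof13T,Pop13}: localizing an unperturbed wedge ground state at a scale $\sim\a^{-1/3}$ and applying a gauge correction produces a valid test function on the perturbed wedge with Rayleigh-quotient error $O(\a^{1/3})$, and the reverse direction is handled symmetrically. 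The exponent $1/3$ in \eqref{E:specesssharp} reflects the optimal balance in this truncation. Uniformity in $\bx'\in\overline\omega$ follows from the finite number of vertices and sides of the polygonal domain $\omega$ together with standard compactness.

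The main obstacle will be the quantitative continuity estimate for the wedge ground energy with respect to opening and to $\bB$, which is less classical than its half-space counterpart and drives the explicit rate $\a^{1/3}$; a careful quasi-mode construction adapted from \cite{PoTh,Pof13T,Pop13} should handle it. Once the pointwise bound is in hand, positivity of $\sE(\bB,\widehat\omega)$, as asserted in Proposition~\ref{P:liminfspecess}, follows from the uniform positivity of local energies on tangent cones: half-space energies are $\ge\Theta_{0}\|\bB\|$, and wedge energies stay bounded away from $0$ for openings lying in a fixed compact sub-interval of $(0,2\pi)$, which is the case for the finitely many vertex openings of $\omega$.
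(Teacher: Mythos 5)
Your proposal is correct and follows essentially the same route as the paper: a stratified pointwise comparison of the tangent-cone energies (interior point / side / vertex) under an $O(\a)$ geometric perturbation (which the paper encodes via the differential of $\rP\circ\rN_{\a}$ being $O(\a)$-close to the identity), then Lipschitz ($\sC^1$) continuity of the half-space energy and H\"older-$1/3$ continuity of the wedge energy in the opening and the field, and finally taking infima over $\overline\omega$. The only real difference is that where you propose to re-derive the wedge continuity estimate by a quasi-mode construction, the paper simply cites \cite[Section 4.4]{Pop13}, and it invokes \cite[Corollary 8.5]{BoDaPo14} for the positivity of $\sE(\bB,\widehat{\omega})$.
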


\begin{proof}
Recall that the transformation $\rP$ is defined in \eqref{D:projrP}. Denote by $\bf{0}$ the origin in the plane $\R^2$. The differential $\rd_{{(\bf{0}},1)} \rP$ of $\rP$ at the point $({\bf{0}},1)$ is the identity $\Id$. So there exist positive constants $C$ and $\a_{0}$ such that for all $\a\in (0,\a_{0})$, 
\begin{equation}
\label{eq:P}
  \forall \bx'\in \overline{\omega_{\a}},\quad \|\rd_{(\bx',1)}\rP-\Id\| \leq C\a.
\end{equation}  
Define $\rN_{\a}$ the scaling of ratio $\a$ around the plane $t=1$:
\begin{equation}\label{D:Neps}
   \rN_{\a}:(\rx_{1},\rx_{2},t)\longmapsto (\a \rx_{1},\a \rx_{2},1+\a(t-1)).
\end{equation}
The scaling $\rN_{\a}$ transforms a neighborhood of $\overline\omega\times\{1\}$ into a neighborhood of $\a\overline\omega\times\{1\}$. Then the composed application $\rP\circ \rN_{\a}$ is a diffeomorphism from a neighborhood of $\overline\omega\times\{1\}$ onto a neighborhood of $\conea\cap\dS^{2}$.

Let us pick a point $\bx'$ in the closure of the polygonal domain $\omega$. By definition of polygonal domains, there exists a local diffeomorphism $\rJ$ that sends a neighborhood of $\bx'$ in $\overline\omega$ onto a neighborhood of $\bf 0$ of the tangent plane sector (in broad sense) $\Pi_{\bx'}$. The differential $\rd_{\bx'} \rJ$ equals $\Id$ by construction. Then $\widehat\rJ := \rJ\otimes\Id_{3}$ realizes a local diffeomorphism that sends a neighborhood of $\bx:=(\bx',1)$ in $\overline{\widehat\omega}$ onto a neighborhood of $\bf 0$ of the tangent cone $\widehat\Pi_{\bx}:= \Pi_{\bx'}\times\R$.

We set $\bp_{\a}:=\rP\circ \rN_{\a}(\bx)$. For any $\a\in(0,\a_{0})$, the composed application 
$$\widehat\rJ\circ(\rP\circ \rN_{\a})^{-1}$$ 
is a local diffeomorphism that sends a neighborhood of the point $\bp_{\a}$ in $\overline\cC_{\a\omega}$ onto a neighborhood of $\bf 0$ of the cone $\widehat\Pi_{\bx}$. Let $\rD_{\a}$ be the differential at $\bf 0$ of the inverse of the map $\widehat\rJ\circ(\rP\circ \rN_{\a})^{-1}$. Then, by construction, the modified map
\[
   \rD_{\a} \circ \widehat\rJ\circ(\rP\circ \rN_{\a})^{-1}
\]
is such that its differential at the point $\bp_{\a}$ is the identity $\Id$. 
Therefore this modified map is a local diffeomorphism that sends a neighborhood of the point $\bp_{\a}$ in $\overline{\conea}$ onto a neighborhood of ${\bf 0}$ in the \emph{tangent} cone $\Pi_{\bp_{\a}}$. 

We deduce that $\rD_{\a}$ is a linear isomorphism between the two cones of interest
\[
   \rD_{\a} : \widehat\Pi_{\bx} \longmapsto \Pi_{\bp_{\a}}\,.
\]
We calculate:
\[
   \rD_{\a} = \rd_{\bf 0} (\rP\circ \rN_{\a}\circ\widehat\rJ^{-1})
   = \rd_{\bp_{\a}}\rP\circ \rd_{\bx}\rN_{\a}\circ \rd_{0}\widehat\rJ^{-1}\,.
\]
But $\rd_{\bf 0}\widehat\rJ^{-1}=\Id$ and $\rd_{\bx}\rN_{\a}=\a\Id$. So we have obtained that $\a\rd_{\bp_{\a}}\rP$ is an isomorphism between the two cones of interest. By homogeneity $\rd_{\bp_{\a}}\rP$ is also an isomorphism between the same sets. Thanks to \eqref{eq:P} we have obtained that

\begin{lem}
Let $\bx'\in\overline\omega$, $\bx=(\bx',1)$ and $\bp_{\a}=\rP\circ \rN_{\a}(\bx)$. Then the linear map $\rL_{\bx,\a} := \rd_{\bp_{\a}}\rP$ is an isomorphism between $\widehat\Pi_{\bx}$ and $\Pi_{\bp_{\a}}$, that satisfies
\begin{equation}
\label{eq:Px}
   \|\rL_{\bx,\a} - \Id\| \leq C\a,
\end{equation}
where $C$ depends neither on $\bx'$ nor on $\a$ and with $\rP, \rN_{\a}$ defined in \eqref{D:projrP}, \eqref{D:Neps}.
\end{lem}
Therefore
\begin{equation}
\label{eq;Encomp}
   \En(\bB,\widehat{\Pi}_{\bx}) - \En(\bB,\Pi_{\bp_{\a}}) =
   \En(\bB,\widehat{\Pi}_{\bx}) - \En(\bB,\rL_{\bx,\a}(\widehat{\Pi}_{\bx})) .
\end{equation}
Relying on \eqref{eq:Px}, we are going to estimate the right hand side of \eqref{eq;Encomp} depending on the position of $\bx'\in\overline{\omega}$:

(a) $\bx'$ is inside $\omega$. Then  $\widehat{\Pi}_{\bx}$ is the full space $\R^3$, just like $\rL_{\bx,\a}(\widehat{\Pi}_{\bx})$. So $\En(\bB,\widehat{\Pi}_{\bx})$ coincides with $\En(\bB,\rL_{\bx,\a}(\widehat{\Pi}_{\bx}))$ in this case.

(b) $\bx'$ belongs to a side of $\omega$. Then $\widehat{\Pi}_{\bx}$ and $\rL_{\bx,\a}(\widehat{\Pi}_{\bx})$ are half-spaces. The lowest energy $\En(\bB,\Pi)$ when $\Pi$ is a half-space is determined by the $\sC^1$ function $\sigma$ acting on the unsigned angle $\theta \in [0,\frac{\pi}{2}]$ between $\bB$ and $\partial\Pi$. If $\theta_{\bx}$\,, $\theta_{\bx,\a}$ denote the angle between $\bB$ and $\partial\widehat{\Pi}_{\bx}$\,, $\partial\rL_{\bx,\a}(\widehat{\Pi}_{\bx,\a})$, respectively, then for a constant $C$ depending on $\omega$:
\begin{equation}
\label{eq.sig}
   |\theta_{\bx}-\theta_{\bx,\a}|\le C\a \quad\mbox{and}\quad
   |\sigma(\theta_{\bx}) - \sigma(\theta_{\bx,\a})| \le C\a.
\end{equation}

(c) $\bx'$ is a corner of $\omega$. Then $\widehat{\Pi}_{\bx}$ and $\rL_{\bx,\a}(\widehat{\Pi}_{\bx})$ are wedges of opening $\alpha_{\bx}$ and $\alpha_{\bx,\a}$ with $|\alpha_{\bx}-\alpha_{\bx,\a}|\le C\a$. Moreover there exist rotations $\rR_{\bx}$ and $\rR_{\bx,\a}$ that transform $\widehat{\Pi}_{\bx}$ and $\rL_{\bx,\a}(\widehat{\Pi}_{\bx})$ into the canonical wedges $\cW_{\alpha_{\bx}}$ and $\cW_{\alpha_{\bx,\a}}$ and there holds $\|\rR_{\bx,\a}-\rR_{\bx}\| \leq C\a$. Since
\[
   \En(\bB,\widehat{\Pi}_{\bx}) = \En(\rR_{\bx}^{-1}\bB,\cW_{\alpha_{\bx}})
   \quad\mbox{and}\quad
   \En(\bB,\rL_{\bx,\a}(\widehat{\Pi}_{\bx})) = \En(\rR_{\bx,\a}^{-1}\bB,\cW_{\alpha_{\bx,\a}}),
\]
we deduce from \cite[Section 4.4]{Pop13}
\[
   |\En(\bB,\widehat{\Pi}_{\bx})-\En(\bB,\rL_{\bx,\a}(\widehat{\Pi}_{\bx}))| \le C\a^{1/3}.
\]
Taking the infimum over $\bx\in \overline{\omega}\times\{1\}$, we deduce the \eqref{E:specesssharp}. As stated in \cite[Corollary 8.5]{BoDaPo14}, there holds $\sE(\bB,\widehat{\omega})>0$. Therefore we deduce Proposition \ref{P:specesssharp}.
\end{proof}

\section{Application to corner concentration\label{sec.appli}}
In this section, we discuss the link between \eqref{eq.condition} and \eqref{eq.condbis}, and we then prove Proposition \ref{prop:cornercon}.

We first prove that condition \eqref{eq.condbis} implies condition \eqref{eq.condition}.
If \eqref{eq.condbis} holds, there exists a vertex $\bv$ such that $\sE(\bB,\Omega)=\En(\bB,\Pi_{\bv})$.
By \cite[Theorem 6.6]{BoDaPo14}, the essential spectrum of $\OP(\bA,\Pi_\bv)$ is given by 
\[
   \seE(\bB,\Pi_\bv):=\inf_{\bp\,\in\, \overline\Pi_\bv \cap \dS^{2}}\En(\bB,\Pi_{\bp}).
\]
But for each $\bp\,\in\, \overline\Pi_\bv \cap \dS^{2}$, the cone $\Pi_\bv$ is the limit of tangent cones $\Pi_\bx$ with points $\bx\in\overline\Omega\setminus\gV$ converging to $\bv$. The continuity of the ground energy then implies that 
\[
   \En(\bB,\Pi_{\bp}) \ge \inf_{\bx\in\overline\Omega\setminus\gV} \En(\bB,\Pi_{\bx}).
\]
We deduce
\[
   \seE(\bB,\Pi_\bv)\ge \inf_{\bx\in\overline\Omega\setminus\gV} \En(\bB,\Pi_{\bx}).
\]
Hence condition \eqref{eq.condition} holds.

Proof of point \emph{a)} of Proposition \ref{prop:cornercon}. By condition \emph{i)}, and as a consequence of \eqref{eq:Ee} and \eqref{eq:homoa}, there holds
\begin{equation}
\label{eq:Env}
   \En(\bB,\Pi_{\bv(\varepsilon)}) \le 3\varepsilon\, e(\bB,\omega).
\end{equation}
Let us bound $\inf_{\bx\in\overline\Omega\setminus\gV} \En(\bB,\Pi_{\bx})$ from below. Let $\bx\in\overline\Omega\setminus\gV$.
\begin{enumerate}
\item If $\bx$ is an interior point, then $\En(\bB,\Pi_{\bx}) = \En(\bB,\R^3)=\|\bB\|$.
\item If $\bx$ belongs to a face, $\Pi_{\bx}$ is a half-space and $\En(\bB,\Pi_{\bx}) \ge \Theta_0\|\bB\| > \frac12 \|\bB\|$.
\item Since $\bx$ is not a vertex, it remains the case when $\bx$ belongs to an edge of $\Omega$, and then $\Pi_{\bx}$ is a wedge. Let $\alpha_\bx$ denote its opening. Then $\En(\bB,\Pi_{\bx})=\En(\bB_{\bx},\cW_{\alpha_\bx})$ where $\bB_\bx$ is deduced from $\bB$ by a suitable rotation. At this point we use the continuity result of \cite[Theorem 4.5]{Pop13} for $(\bB,\alpha)\mapsto\En(\bB,\alpha)$ with respect to $\alpha\in(0,2\pi)$ and $\bB\in\dS^2$, which yields
\begin{equation}
\label{eq:wedgeinf}
   \min_{\beta_0\le\alpha\le2\pi-\beta_0,\ \|\bB\|=1} \En(\bB,\cW_{\alpha}) =: c(\beta_0)>0,
\end{equation}
where the diamagnetic inequality has been used to get the positivity. We deduce by homogeneity $\En(\bB,\Pi_{\bx})\ge c(\beta_0)\|\bB\|$.
\end{enumerate}
Finally 
\[
   \inf_{\bx\in\overline\Omega\setminus\gV} \En(\bB,\Pi_{\bx}) \ge \min\{c(\beta_0),\tfrac12\} \|\bB\|.
\]
Combined with the previous upper bound \eqref{eq:Env} at the vertex $\bv(\varepsilon)$, this estimate yields that condition \eqref{eq.condbis} is satisfied for $\a$ small enough, hence point \emph{a)} of Proposition \ref{prop:cornercon}.

Proof of point \emph{b)} of Proposition \ref{prop:cornercon}. Let us define
\[
   \Omega(\varepsilon) = \conea\cap\{\rx_3<1\}.
\]
By construction, we only have to check \eqref{eq:edgebound}. The edges of $\Omega(\varepsilon)$ can be classified in two sets:
\begin{enumerate}
\item The edges contained in those of $\conea$. We have proved in Section \ref{sec.esssp} that their opening converge to the opening angles of $\omega$ as $\varepsilon\to0$. 
\item The edges contained in the plane $\{\rx_3=1\}$. Their openings tend to $\frac\pi2$ as $\varepsilon\to0$.
\end{enumerate}
Hence \eqref{eq:edgebound}.

\section{Analogies with the Robin Laplacian}
\label{sec:rob}
We describe here some similarities of the Neumann magnetic Laplacian with the Robin Laplacian on corner domains. For a real parameter $\gamma$, this last operator acts as the Laplacian on functions satisfying the mixed boundary condition $\partial_{n}u-\gamma u=0$ where $\partial_{n}$ is the outward normal and $\gamma$ is a real parameter. The associated quadratic form is
$$u\mapsto \int_{\Omega}|\nabla u(x)|^2\rd x-\gamma\int_{\partial\Omega}|u(s)|^2 \rd s, \quad u\in\sH^{1}(\Omega).$$
Since the study initiated in \cite{LaOckSa98}, many works have been done in order to understand the asymptotics of the eigenpairs of this operator in the limit $\gamma\to+\infty$. It occurs that in this regime, the first eigenvalue $\lambda^{\Rob}_{\gamma}(\Omega)$ of this Robin Laplacian shares numerous common features with those of the magnetic Laplacian in the semi-classical limit. Levitin and Parnovski prove that for a corner domain $\Omega$ satisfying a uniform interior cone condition, there holds (see \cite[Theorem 3.2]{LevPar08})
\begin{equation}
\label{eq:Roblim}
    \lambda_{\gamma}^{\Rob}(\Omega)\underset{\gamma\to+\infty}{\sim}
    \gamma^2 \inf_{\bx \in \partial{\Omega}} \En^{\Rob}(\Pi_{\bx}),
\end{equation}
where, as before, $\En^{\Rob}(\Pi_{\bx})$ is the ground state energy of the model operator ($\gamma=1$) on the tangent cone $\Pi_{\bx}$ at $\bx$.
In fact, $\En^{\Rob}(\Pi_{\bx})<0$ for any boundary point $\bx$. This result leads to the same problematics as ours: compare the ground state energies of model operators on various tangent cones. When $\Pi_{\bx}$ is either a half-space or a wedge, $\En^{\Rob}(\Pi_{\bx})$ is explicit:
\begin{equation}
\label{eq:Robexp}
 \En^{\Rob}(\R^{3}_{+})=-1 \quad \mbox{and} \quad 
 \En^{\Rob}(\cW_{\alpha})=\left\{ 
 \begin{aligned}
 &-\sin^{-2}(\tfrac{\alpha}{2}) \ \ \mbox{if} \ \ \alpha\in (0,\pi]
 \\
  &-1\ \ \mbox{if} \ \ \alpha\in [\pi,2\pi).
 \end{aligned}
 \right.
\end{equation}
This shows, in some sense, that the Robin Laplacian is simpler for these cones. We notice that $\En^{\Rob}(\cW_{\alpha})\to-\infty$ as $\alpha\to0$. This fact should be compared to \eqref{eq:wedge}. The general idea behind this is an analogy between the degeneracy of the ground state energies, as follows: Whereas the ground energy (always positive) is going to $0$ for the magnetic Laplacian on sharp cones, the ground energy (always finite) of the Robin Laplacian goes to $-\infty$, as we shall explain below.
 
However, for cones of higher dimensions, no explicit expression like \eqref{eq:Robexp} is known for $\En^{\Rob}(\Pi_{\bx})$. In \cite[Section 5]{LevPar08}, a two-sided estimate is given for convex cones of dimension $\geq3$. The idea for this estimate is quite similar to our strategy: Given a suitable reference axis $\{\rx_{3}>0\}$ intersecting $\Pi\cap \dS^{2}$ at a point denoted by $\theta$, one defines the plane $P$ tangent to $\dS^{2}$ at $\theta$, so that the intersection $P\cap \Pi$ defines a section $\omega$ for which the cone $\Pi$ coincides with $\cone$ given by \eqref{def.cone}. Using polar coordinates $(\rho,\phi)\in \R^{+}\times \dS^{1}$ in the plane $P$ centered at $\theta$, one parametrizes the boundary of $\omega$ by a function $b$ through the relation $\rho=b(\phi)$. Then\footnote{In \cite[Theorem 5.1]{LevPar08}, the quantity $-\En^{\Rob}(\Pi)$ is estimated, so that the upper bound presented here, corresponds to the lower bound of the paper {\em loc. cit.}}, \cite[Theorem 5.1]{LevPar08} provides the upper bound
\begin{equation}
\label{eq:LP}
   \En^{\Rob}(\Pi)  \leq - \Bigg(\frac{\int_{\dS^{1}}\sigma(\phi) \,b(\phi)^2 \rd \phi}
   {\int_{\dS^{1}}b(\phi)^2 \rd \phi} \bigg)^2 \quad \mbox{with} \quad 
   \sigma(\phi)=\sqrt{1+b(\phi)^{-2}+b'(\phi)^2b(\phi)^{-4}} .
\end{equation}
Note that this estimate depends on the choice of the reference coordinate $\rx_{3}$, exactly as in our case, see Remark \ref{rem:invar}, and can be optimized by taking the infimum on $\theta$. 

Estimate \eqref{eq:LP} shows in particular that for our sharp cones $\conea$, the energy $\En^{\Rob}(\conea)$ goes to $-\infty$ like $\varepsilon^{-2}$ as $\varepsilon\to0$. This property is the analog of our upper bounds \eqref{eq:Ee}-\eqref{eq:homoa}. We expect that an analog of our formula \eqref{eq.Eness2} is valid, implying that there exists a finite limit for the bottom of the essential spectrum of the model Robin Laplacians defined on $\conea$, as $\varepsilon\to0$. This would provide similar conclusions for Robin problem and for the magnetic Laplacian.

\appendix
\section{Tangent cones and corner domains}
\label{SS:tangent}
Following \cite[Section 2]{Dau88} (see also \cite[Section 1]{BoDaPo14}), we recall the definition of corner domains.
We call a \emph{cone} any open subset $\Pi$ of $\R^n$ satisfying 
\[
   \forall\rho>0 \ \ \mbox{and}\ \ \bx\in\Pi,\quad \rho \bx\in\Pi,
\]
and the \emph{section} of the cone $\Pi$ is its subset $\Pi\cap\dS^{n-1}$. Note that $\dS^0=\{-1,1\}$.

\begin{definition}[\sc Tangent cone]
\label{D:Tangentcones}
Let $\Omega$ be an open subset of $M=\R^n$ or $\dS^n$. Let $\bx_{0}\in \overline{\Omega}$. 
The cone $\Pi_{\bx_{0}}$ is said to be \emph{tangent to} $\Omega$ at $\bx_{0}$ if there exists a local $\sC^\infty$  diffeomorphism $\diffeo^{\bx_{0}}$ which maps a neighborhood $\cU_{\bx_{0}}$ of $\bx_{0}$ in $M$ onto a neighborhood $\cV_{\bx_{0}}$ of $\bfz$ in $\R^n$ and such that
\begin{equation*}%\label{eq:diffeo}
   \diffeo^{\bx_{0}}(\bx_0) = \bfz,\quad
   \diffeo^{\bx_{0}}(\cU_{\bx_{0}}\cap\Omega) = \cV_{\bx_{0}}\cap\Pi_{\bx_{0}} 
   \quad\mbox{and}\quad
   \diffeo^{\bx_{0}}(\cU_{\bx_{0}}\cap\partial\Omega) = \cV_{\bx_{0}}\cap\partial\Pi_{\bx_{0}} .
\end{equation*}
\end{definition}

\begin{definition}[\sc Class of corner domains]
\label{D:cornerdomains}
For $M=\R^n$ or $\dS^n$, the classes of corner domains $\gD(M)$ and tangent cones $\gP_{n}$ are defined as follow: 

{\sc Initialization}: $\gP_0$ has one element, $\{0\}$.
$\gD(\dS^0)$ is formed by all subsets of $\dS^0$.

{\sc Recurrence}: For $n\ge1$,
\begin{enumerate}
\item $\Pi\in\gP_n$ if and only if the section of $\Pi$ belongs to $\gD(\dS^{n-1})$,
\item $\Omega\in\gD(M)$ if and only if for any $\bx_{0}\in\overline\Omega$, 
there exists a tangent cone $\Pi_{\bx_{0}}\in\gP_n$ to $\Omega$ at $\bx_{0}$.
\end{enumerate}
\end{definition}

\subsection*{Acknowledgments.} 
This work was partially supported by the ANR (Agence Nationale de la Recherche), project {\sc Nosevol} ANR-11-BS01-0019 and by the Centre Henri Lebesgue (program ``Investissements d'avenir'' -- n$^{\rm o}$ ANR-11-LABX-0020-01).

%\bibliographystyle{mnachrn}
%\bibliography{biblio}

\end{document}